\documentclass[reqno,11pt]{amsart}

\usepackage{amsmath,amsfonts,amsthm,amssymb,amscd}
\usepackage[dvips]{graphics}
\usepackage{epsfig}
\usepackage[all,cmtip]{xy}

\addtolength{\textheight}{2.2cm} \addtolength{\voffset}{-1cm}


\DeclareMathOperator{\Hom}{Hom}

\DeclareMathOperator{\id}{id}

\DeclareMathOperator{\Spec}{Spec}

\DeclareMathOperator{\Tot}{Tot}

\DeclareMathOperator{\KH}{KH}

\DeclareMathOperator{\K}{K}

\DeclareMathOperator{\HH}{HH}

\DeclareMathOperator{\HC}{HC}

\DeclareMathOperator{\BOT}{BOT}

\DeclareMathOperator{\holim}{holim}


\newcommand{\Spectra}{\mathsf{Spectra}}










\newcommand\be{\begin{equation}}
\newcommand\ee{\end{equation}}
\newcommand\bea{\begin{eqnarray}}
\newcommand\eea{\end{eqnarray}}
\newcommand\bi{\begin{itemize}}
\newcommand\ei{\end{itemize}}
\newcommand\ben{\begin{enumerate}}
\newcommand\een{\end{enumerate}}
\newcommand\bc{\begin{center}}
\newcommand\ec{\end{center}}
\newcommand\ba{\begin{array}}
\newcommand\ea{\end{array}}




\newcommand{\R}{\ensuremath{\mathbb{R}}}

\newcommand{\Z}{\ensuremath{\mathbb{Z}}}
\newcommand{\Q}{\mathbb{Q}}
\newcommand{\N}{\mathbb{N}}


\newcommand{\G}{\ensuremath{\mathbb{G}}}
\newcommand{\Prj}{\ensuremath{\mathbb{P}}}
\newcommand{\Hyp}{\ensuremath{\mathbb{H}}}










\newtheorem{thm}{Theorem}[section]

\newtheorem{const}[thm]{Construction}
\newtheorem{cor}[thm]{Corollary}
\newtheorem{lem}[thm]{Lemma}
\newtheorem{prop}[thm]{Proposition}
\newtheorem{exa}[thm]{Example}
\newtheorem{defi}[thm]{Definition}

\theoremstyle{definition}
\newtheorem{rek}[thm]{Remark}














\numberwithin{equation}{section}

\newcommand{\inv}{^{-1}}

\binoppenalty=10000
\relpenalty=10000
\sloppy

\begin{document}

\title[$\KH$-Theory of Complete Simplicial Toric Varieties]{The $\KH$-Theory of Complete Simplicial Toric Varieties and the Algebraic $\K$-Theory of Weighted Projective Spaces}

\author{Adam Massey}
\email{amassey3102@ucla.edu}
\address{Mathematics Department, University of California, Los Angeles, Los Angeles, CA 90095}

\date{\today}

\keywords{Toric Varieties, Algebraic K-Theory, Weighted Projective Spaces, K-regularity}

\begin{abstract}
We show that, for a complete simplicial toric variety $X$, we can determine its homotopy $\KH$-theory entirely in terms of the torus pieces of open sets forming an open cover of $X$.  We then construct conditions under which, given two complete simplicial toric varieties, the two spectra $\KH(X) \otimes \Q$ and $\KH(Y) \otimes \Q$ are weakly equivalent.  We apply this result to determine the rational $\KH$-theory of weighted projective spaces.  We next examine $\K$-regularity for complete toric surfaces; in particular, we show that complete toric surfaces are $\K_{0}$-regular.  We then determine conditions under which our approach for dimension $2$ works in arbitrary dimensions, before demonstrating that weighted projective spaces are not $\K_{1}$-regular, and for dimensions bigger than $2$ are also not in general $\K_{0}$-regular.
\end{abstract}

\maketitle


\section{Introduction}\label{sect:intro}

In this paper, we examine the basic properties of complete simplicial toric varieties, and use these properties to compute their $\K$-theory.  Following the ideas of \cite[Proposition $5.6$]{CHWW}, the $\K$-theory of any toric variety over a field of characteristic $0$ is obtained as a direct sum of its $\KH$-theory and its $\mathcal{F}_{\K}$ groups.  We begin by focusing on the $\KH$-theory; the $\mathcal{F}_{\K}$ groups turn out to be much more difficult and are handled separately.


We conjecture that if $X$ and $Y$ are two complete simplicial toric varieties with the same simplicial structure, then the $\KH$-theories of $X$ and $Y$ are isomorphic; however, we are currently unable to prove a relationship between $\KH(X)$ and $\KH(Y)$ exists under these conditions.  To remedy this, we add additional conditions to force a relationship between $X$ and $Y$ that allows us to show that $\KH(X)$ and $\KH(Y)$ are rationally isomorphic.  The main goal of the first half of this paper is to construct and examine this relationship.



Using this relationship, our goal throughout this paper will be to prove the following theorem:

\begin{thm}\label{bigmain1}

Let $\Prj(q_{0} , ... , q_{d})$ be any weighted projective space of dimension $d$ over a regular ring $R$.  Then: \begin{enumerate}

\item[(a)] For every $n$, we have $\KH_{n}(\Prj(q_{0} , ... , q_{d})) \otimes \Q \cong \KH_{n}(\Prj^{d}) \otimes \Q.$

\end{enumerate} \noindent If $R$ happens to be a field of characteristic $0$, then we can conclude the following additional results: \begin{enumerate}

\item[(b)] Any $2$-dimensional weighted projective space $\Prj(a,b,c)$ is $\K_{0}$-regular.

\item[(c)] Any $d$-dimensional weighted projective space $\Prj(q_{0} , ... , q_{d})$ whose singular set consists of only isolated singular points is $\K_{0}$-regular.

\item[(d)] If our weighted projective space is of the form $\Prj(1,1, ... ,1,a)$, then for $n \leq 0$ we can conclude the stronger statement $\K_{n}(\Prj(1,1, ... ,1,a)) \cong \K_{n}(\Prj^{d}).$

\end{enumerate}

\end{thm}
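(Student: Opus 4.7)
The plan is to deduce the four parts from the general machinery developed in the first half of the paper, together with targeted local analysis near singular points.

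For (a), the key observation is that $\Prj(q_{0}, \ldots, q_{d})$ and $\Prj^{d}$ are both complete simplicial toric varieties with combinatorially identical fan structure: each has $d+1$ one-dimensional rays and one maximal cone for every choice of $d$ of those rays, with all faces included. I would verify that the pair $(\Prj(q_{0}, \ldots, q_{d}), \Prj^{d})$ satisfies the hypotheses of the rational $\KH$-equivalence theorem constructed earlier, by exhibiting a compatible correspondence between the torus pieces appearing in the standard affine covers of the two varieties (this is where the additional conditions imposed in the first half of the paper must be checked; weighted projective space is a quotient of $\A^{d+1} \setminus \{0\}$ by a one-torus action, which makes the comparison explicit). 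Applying that theorem then yields the weak equivalence $\KH(\Prj(q_{0}, \ldots, q_{d})) \otimes \Q \simeq \KH(\Prj^{d}) \otimes \Q$, from which the homotopy group statement is immediate.

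Parts (b) and (c) would follow from the $\K_{0}$-regularity result for complete toric surfaces established earlier in the paper. Since $\Prj(a,b,c)$ is a complete toric surface it falls under that theorem directly, giving (b). For (c), I would inspect the surface proof, which reduces to analyzing finitely many isolated singularities, each of which locally looks like $\Spec$ of an invariant ring $R[x_{1},\ldots,x_{d}]^{G}$ for a finite cyclic group $G$ acting diagonally. The same local reduction goes through for any higher-dimensional complete simplicial toric variety whose singular locus is zero-dimensional: on the regular locus $\K_{0}$-regularity is automatic, and at each isolated singular point one is once again reduced to controlling a finite quotient singularity by the method already developed in dimension two.

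For (d), I would use the splitting $\K \cong \KH \oplus \mathcal{F}_{\K}$ from the introduction and analyze both summands in degrees $n \leq 0$. The variety $\Prj(1,1,\ldots,1,a)$ has only a single singular point (the image of the final coordinate axis), so the Mayer--Vietoris spectral sequence associated with its toric affine cover collapses drastically: all but one open piece is smooth and contributes only the $\KH$-theory of a torus-times-affine-space. I would show that $\mathcal{F}_{\K_{n}}$ vanishes for $n \leq 0$ by using that the invariants feeding it are supported in strictly positive homological degrees, and separately upgrade the rational comparison from (a) to an integral comparison in this range for this particular family, exploiting that only one singular contribution can produce torsion.

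The principal obstacle is exactly this rational-to-integral promotion in (d). The arguments in (a) succeed because finite-group actions on $\K$-theory become invertible after tensoring with $\Q$; removing that simplification forces a direct analysis of the finite-group contributions at each singular point, and in general these are not known to vanish. The family $\Prj(1,1,\ldots,1,a)$ is precisely the case in which those contributions degenerate sufficiently to allow an integral conclusion in nonpositive degrees, and controlling this degeneration will be the technical heart of the proof.
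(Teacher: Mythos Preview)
Your treatment of (a), (b), and (c) is essentially the paper's approach. For (a) the paper shows that any two complete $d$-dimensional simplicial toric varieties with exactly $d+1$ maximal cones have rationally isomorphic fans, and then invokes Theorem~\ref{main1}; for (b) and (c) the mechanism is Zariski descent: since the singular locus is $0$-dimensional, intersections of the maximal affine opens are smooth, so $\mathcal{F}_{\K}$ vanishes there and one gets $(\mathcal{F}_{\K})_{n}(X) \cong \bigoplus_{i} (\mathcal{F}_{\K})_{n}(U_{\sigma_{i}})$, which vanishes for $n \leq 0$ by Gubeladze. Your description of (c) as ``analyzing isolated singularities via invariant rings'' is a slight mischaracterization---the paper never looks at the singularities directly, only at the smoothness of the overlaps---but the outcome is the same.

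For (d) there is a real gap. You correctly split $\K \cong \KH \oplus \mathcal{F}_{\K}$ and handle the $\mathcal{F}_{\K}$ piece via the decomposition plus Gubeladze, exactly as the paper does. But your plan for the $\KH$ piece is to ``upgrade the rational comparison from (a) to an integral comparison,'' controlling the torsion coming from the single singular point. The paper does \emph{not} do this, and it is not clear your route would succeed: the rational argument in (a) goes through isogenies of tori whose effect on $\K$-theory is multiplication by nonzero integers, and there is no evident mechanism for cancelling those denominators integrally even when only one cone is singular. Instead, the paper computes $\KH_{n}(\Prj(1,\ldots,1,a))$ for $n \leq 0$ by a completely different method: it writes down an explicit resolution of singularities $\widetilde{X} \to \Prj(1,\ldots,1,a)$ (add the ray $-e_{d}$), identifies the exceptional divisor as $\Prj^{d-1}$, and applies cdh descent to the resulting blow-up square. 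A retraction $\widetilde{X} \to \Prj^{d-1}$ shows the boundary map in the long exact sequence is surjective, forcing $\KH_{n}(\Prj(1,\ldots,1,a))$ to inject into $\KH_{n}(\widetilde{X}) \oplus \KH_{n}(k)$, which vanishes for $n < 0$ and is free abelian for $n = 0$; the rank is then read off from part (a). You should replace your rational-to-integral strategy with this cdh/resolution argument.
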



Theorem \ref{bigmain1} is proven in several stages.  Most of our work towards proving Theorem \ref{bigmain1} is done by proving Theorem \ref{main1}; as such, much of this paper will focus on the proof of (and applications of) Theorem \ref{main1}.  Our work is related to a recent paper of Gubeladze's, in which he proves that for a projective, simplicial toric variety $X$ over a regular ring $R$, $\KH_{n}(X)\otimes \Q = (\KH_{n}(R)\otimes \Q)^{m}$ where $m$ is the number of maximal cones in the fan of $X$; see \cite[Corollary 2.5(c)]{Gub2}.  This provides an alternate proof of part (a) of Theorem \ref{bigmain1} and, as we'll see later, provides an alternate proof of Theorem \ref{main1} in the case that our toric varieties are projective.  The interested reader is encouraged to read \cite{Gub2} for further details.

\section{Toric Varieties: Notations and Terminology}\label{sect:toric}

For this paper we adopt the notation in \cite{Ful} for toric varieties: $\sigma$ denotes a cone, $\tau \prec \sigma$ denotes a face of $\sigma$, the $R$-scheme $U_{\sigma} = \Spec \left( R[\sigma^{\vee} \cap M] \right)$ denotes the affine open set associated to the cone $\sigma$ (where $R$ is any ring), $\Delta$ denotes a fan (and when we want to specify the fan associated to the toric variety $X$ we use the notation $\Delta_{X}$), and $X(\Delta)$ denotes the toric variety associated to the fan $\Delta$.  For the details of these constructions, the reader is referred to the standard references \cite{Ful} and \cite{Cox}, as well as the recent preprint \cite{Roh}.  One can also find many of the basics that we assume for this paper discussed in the early sections of \cite{CHWW}.  Note that we only consider split toric varieties; the case in which the torus is not split will not be considered in this paper.

We say that a cone $\sigma$ is \emph{simplicial} if its minimal set of generators $\{v_{1}, ... , v_{k}\}$ is linearly independent over $\R$.  We say that a fan $\Delta$ is simplicial if every cone $\sigma \in \Delta$ is simplicial, and we say that a toric variety $X$ is simplicial if its associated fan $\Delta_{X}$ is simplicial.



Let $N$ be a lattice, and $\sigma \subset N_{\R}$ be a cone.  We define $N_{\sigma} = (\sigma \cap N) + (- \sigma \cap N)$ to be the sublattice of $N$ generated by $\sigma$.  Similarly, we define $\widetilde{N}_{\sigma} = N/N_{\sigma}$.





\begin{prop}\label{toruspart1}

Let $\sigma$ be a $p$-dimensional cone in a lattice $N$ with $\dim N_{\R} = n$.  Then $U_{\sigma} \cong U_{\sigma'} \times T$, where $T \cong \G_{m}^{n-p}$ is a split algebraic torus of rank $n-p$.

\end{prop}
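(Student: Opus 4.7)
The plan is to reduce $U_{\sigma}$ to a product by choosing a splitting of the lattice $N$ compatible with $\sigma$, and then applying $\Spec(R[-])$ to the resulting decomposition of the dual cone's semigroup algebra.

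First I would let $N' = \mathrm{span}_{\R}(\sigma) \cap N$, the saturation inside $N$ of the sublattice generated by $\sigma$. Because $\sigma$ is a rational $p$-dimensional cone, $N'$ is a saturated sublattice of rank $p$, so $N/N'$ is torsion free and hence free of rank $n-p$. Fix a direct sum decomposition $N = N' \oplus N''$ with $\mathrm{rank}\,N'' = n-p$, and write $\sigma'$ for $\sigma$ viewed as a top-dimensional cone inside $N'_{\R}$.

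Dualizing the splitting gives $M = M' \oplus M''$, where $M' = \Hom(N',\Z)$ and $M'' = \Hom(N'',\Z)$. The central computation is then
\[
\sigma^{\vee} \cap M \;=\; \bigl((\sigma')^{\vee} \cap M'\bigr) \,\oplus\, M''.
\]
This is immediate: any $v \in \sigma$ lies in $N'_{\R}$, so under the splitting it has the form $(v,0)$, and for $m = (m',m'') \in M_{\R}$ we have $\langle m,v\rangle = \langle m',v\rangle$. Thus $m \in \sigma^{\vee}$ iff $m' \in (\sigma')^{\vee}$, with no restriction on $m''$, and intersecting with $M$ yields the stated direct sum. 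Passing to group algebras gives $R[\sigma^{\vee} \cap M] \cong R[(\sigma')^{\vee} \cap M'] \otimes_{R} R[M'']$, and taking $\Spec$ produces $U_{\sigma} \cong U_{\sigma'} \times \Spec(R[M''])$. Since $M''$ is free of rank $n-p$, the second factor is the split torus $\G_{m}^{n-p}$, completing the argument.

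The one point that requires some care is the choice of sublattice: for a free complement $N''$ to exist one needs the saturated $N'$ rather than the a priori sublattice $N_{\sigma}$ defined earlier. Once saturation (equivalently, that $N/N'$ is torsion free) is checked, everything reduces to a routine identification of dual cones and semigroup algebras, so this lattice bookkeeping is the only real obstacle in the proof.
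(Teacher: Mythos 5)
Your proof is correct and is precisely the standard argument (split $N$ along the saturation of the sublattice spanned by $\sigma$, dualize, and observe $\sigma^{\vee}\cap M = ((\sigma')^{\vee}\cap M')\oplus M''$) that the paper simply outsources to \cite[p.~29]{Ful}. One small remark: the sublattice $N_{\sigma}=(\sigma\cap N)+(-\sigma\cap N)$ defined in the paper is already saturated (it coincides with $\mathrm{span}_{\R}(\sigma)\cap N$, since $\sigma$ has interior lattice points in its span), so the distinction you flag at the end is harmless.
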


\begin{proof}

See \cite[Page $29$]{Ful}.




\end{proof}

\begin{defi}\label{toruspart2}

We define the ``torus piece of $U_{\sigma}$'' to be $\Spec (R[\Hom(\widetilde{N}_{\sigma},\Z)])$, and we denote this torus piece by $T_{\sigma}$.  When we want to indicate that $T_{\sigma}$ is the torus piece of an open subset $U_{\sigma}$ of the variety $X$, we will do so by writing it as $T_{\sigma}^{X}$.

\end{defi}

\begin{rek}

It is an easy exercise to check that the torus $T$ constructed in Proposition \ref{toruspart1} is the torus piece $T_{\sigma}$ of $U_{\sigma}$ in the sense of Definition \ref{toruspart2}.


\end{rek}

\section{The Simplicial Structure of a Complete Simplicial Toric Variety}\label{sect:simp}

From \cite[Proposition $5.6$]{CHWW}, the algebraic $\K$-theory of any toric variety $X$ (over a field of characteristic $0$) is determined completely by its $\KH$-theory and its $\mathcal{F}_{\K}$ groups.  We begin by examining the $\KH$-theory of complete simplicial toric varieties.  






\begin{prop}\label{prop:aff}

Let $\sigma$ be any $p$-dimensional cone.  Then \bea \K(T_{\sigma}) \longrightarrow \KH(T_{\sigma}) \longrightarrow \KH(U_{\sigma}) \eea are weak equivalences as spectra.  In other words, the $\KH$ groups of an open set corresponding to a cone are just the $\K$ groups of its associated torus piece.

\end{prop}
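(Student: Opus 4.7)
The plan is to handle each map in the composition separately. For $\K(T_\sigma) \to \KH(T_\sigma)$, the torus piece $T_\sigma \cong \G_m^{n-p}$ is smooth over $R$, hence regular Noetherian under the standing hypothesis that $R$ is regular, so Weibel's comparison theorem ($\K \xrightarrow{\sim} \KH$ on regular Noetherian schemes) gives the first weak equivalence at once.

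The substantive part is the second map, $\KH(T_\sigma) \to \KH(U_\sigma)$, which I would identify with $\KH(p)$ for the projection $p \colon U_\sigma \to T_\sigma$ coming from the decomposition $U_\sigma \cong U_{\sigma'} \times T_\sigma$ of Proposition \ref{toruspart1}. Here $\sigma'$ denotes $\sigma$ viewed as a top-dimensional cone in its own lattice $N_\sigma$, so $U_{\sigma'}$ has a unique torus-fixed point $x_0$. My approach is to build an $\A^1$-deformation retraction of $U_{\sigma'}$ onto $x_0$ by choosing a lattice point $v$ in the relative interior of $\sigma'$ and setting $x^m \mapsto t^{\langle m, v \rangle} x^m$ on the defining semigroup algebra; since $v$ is interior and $\sigma'$ is top-dimensional (so its unit subgroup is trivial), $\langle m, v \rangle$ is strictly positive on every non-zero element of $(\sigma')^\vee$, so the formula lands in the polynomial ring and defines a morphism $\mu \colon \A^1 \times U_{\sigma'} \to U_{\sigma'}$ satisfying $\mu|_{t=1} = \id_{U_{\sigma'}}$ and $\mu|_{t=0}$ the constant map to $x_0$. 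Crossing $\mu$ with $\id_{T_\sigma}$ produces an $\A^1$-homotopy between $\id_{U_\sigma}$ and $i \circ p$, where $i \colon T_\sigma \hookrightarrow U_\sigma$ is the inclusion $t \mapsto (x_0, t)$.

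Finally, I would invoke $\A^1$-homotopy invariance of $\KH$ in the standard form that $\A^1$-homotopic scheme morphisms induce identical maps on $\KH$: the two sections $i_0^*, i_1^*$ of the isomorphism $\pi^* \colon \KH(Y) \xrightarrow{\sim} \KH(Y \times \A^1)$ must coincide because each is its unique inverse. Applied to our homotopy, this gives $\KH(p) \circ \KH(i) = \id_{\KH(U_\sigma)}$, while the identity $p \circ i = \id_{T_\sigma}$ yields $\KH(i) \circ \KH(p) = \id_{\KH(T_\sigma)}$ for free. Hence $\KH(p)$ and $\KH(i)$ are mutually inverse weak equivalences, which is the desired conclusion.

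The only real obstacle is constructing the scaling morphism $\mu$ carefully: one needs to know that interior lattice points exist and to verify the positivity estimate $\langle m, v \rangle > 0$ for every nonzero $m \in (\sigma')^\vee \cap M_{\sigma}$. Once this is in hand the argument is purely formal, depending only on Weibel's theorem for the first map and $\A^1$-homotopy invariance of $\KH$ for the second.
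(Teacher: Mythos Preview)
Your argument is correct and is essentially an unpacking of the paper's one-line proof: the paper simply cites \cite[Theorem~1.2(a)]{Wei1} (the $\A^1$-homotopy invariance of $\KH$) together with smoothness of algebraic tori, whereas you make explicit the $\A^1$-contraction of $U_{\sigma'}$ onto its torus-fixed point via the interior lattice point $v$, which is precisely the geometric content hidden behind that citation. The only remark is that your appeal to regularity of $T_\sigma$ tacitly assumes the base ring $R$ is regular, an assumption the paper also uses implicitly (and states explicitly in the main theorems where this proposition is applied).
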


\begin{proof}

This is immediate from \cite[Theorem $1.2$, Part (a)]{Wei1} and the fact that algebraic tori are smooth.


\end{proof}

It is the result of Proposition \ref{prop:aff} that provides the intuition for the approach we use, and leads us to consider ways in which we might use the simplicial structure of a toric variety to determine its $\KH$-theory.  The next few sections will be dedicated to proving and applying the following theorem, which is built from the intuition of Proposition \ref{prop:aff} and is our main technical result.

\begin{thm}\label{main1}

Let $X$ and $Y$ be two complete $d$-dimensional simplicial toric varieties over a regular ring $R$. Let $\Delta_{X}$ live in the lattice $N^{X}$ and $\Delta_{Y}$ live in the lattice $N^{Y}$ and suppose that $\Delta_{X}$ and $\Delta_{Y}$ are rationally isomorphic via a rational linear automorphism $F_{\R}: N^{X}_{\R} \longrightarrow N^{Y}_{\R}$.  Then $\KH(X) \otimes \Q$ and $\KH(Y) \otimes \Q$ are weakly equivalent as spectra; in particular, $\KH_{n}(X) \otimes \Q \cong \KH_{n}(Y) \otimes \Q$ for all $n$.

\end{thm}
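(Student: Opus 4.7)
The plan is to apply Nisnevich (hence Zariski) descent for $\KH$-theory to the canonical affine cover $\{U_\sigma : \sigma \text{ maximal in }\Delta\}$ of a complete toric variety. The key combinatorial feature is that $U_{\sigma_1}\cap\cdots\cap U_{\sigma_k}=U_{\sigma_1\cap\cdots\cap\sigma_k}$ always lies in the cover, so descent realizes $\KH(X)$ as the homotopy limit of a finite \v{C}ech cosimplicial spectrum whose entries are $\KH(U_\sigma)$ for $\sigma\in\Delta_X$. By Proposition \ref{prop:aff} each entry is weakly equivalent to $K(T_\sigma)$, and since $T_\sigma\cong\G_m^{d-\dim\sigma}$ over the regular ring $R$, the Bass fundamental theorem identifies $K(T_\sigma)\otimes\Q$ with the exterior algebra over $K(R)\otimes\Q$ on the character lattice $X^*(T_\sigma)=\sigma^\perp\cap M^X$. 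Under this identification, the coface (open-restriction) maps correspond to the maps induced by the lattice inclusions $\sigma^\perp\cap M^X\hookrightarrow\tau^\perp\cap M^X$ for $\tau\prec\sigma$.

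The rational linear automorphism $F_\R$ comes from a $\Q$-linear isomorphism $F_\Q:N^X_\Q\to N^Y_\Q$ carrying $\Delta_X$ bijectively onto $\Delta_Y$ and preserving dimensions and the face relation. Its contragredient $F^*_\Q:M^Y_\Q\to M^X_\Q$ restricts, for each $\sigma\in\Delta_X$, to a $\Q$-linear isomorphism $F(\sigma)^\perp\cap M^Y_\Q\xrightarrow{\sim}\sigma^\perp\cap M^X_\Q$. Because $F^*_\Q$ is a single linear map, these isomorphisms automatically intertwine the inclusions for $\tau\prec\sigma$, and therefore assemble into a natural equivalence of diagrams of rational character spaces indexed by the common face poset of the two fans. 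This is precisely where rationalization is essential: $F^*_\Q$ in general does not preserve the integral lattices $M^X,M^Y$, and identifies $F(\sigma)^\perp\cap M^Y$ with $\sigma^\perp\cap M^X$ only up to a finite-index inclusion that is destroyed upon tensoring with $\Q$.

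Putting the pieces together, $F^*_\Q$ induces a levelwise weak equivalence between the rationalized \v{C}ech cosimplicial spectra computing $\KH(X)\otimes\Q$ and $\KH(Y)\otimes\Q$; since the cover is finite, this passes to a weak equivalence of homotopy limits and yields the theorem. The main obstacle I anticipate is not the final matching but the preceding naturality verification: one must check that the equivalence $\KH(U_\sigma)\simeq K(T_\sigma)$ from Proposition \ref{prop:aff} is functorial in $\sigma$, so that restriction along the open immersions $U_\tau\hookrightarrow U_\sigma$ really corresponds on torus pieces to the inclusions $\sigma^\perp\cap M^X\hookrightarrow\tau^\perp\cap M^X$. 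Once this bookkeeping is in place, the rational comparison produced by $F^*_\Q$ is essentially formal, and feeds directly into the applications to weighted projective spaces in Theorem \ref{bigmain1}(a).
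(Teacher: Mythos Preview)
Your proposal is correct and follows essentially the same approach as the paper: Zariski descent for the maximal-cone cover, reduction to torus pieces via Proposition \ref{prop:aff}, and the Fundamental Theorem to see that the lattice map induced by $F$ gives a rational isomorphism on each $K_n(R[\Z^r])$. The paper packages the naturality issue you flag by building an auxiliary simplicial scheme $\BOT_X$ out of the torus pieces (with face and degeneracy maps induced by the lattice projections $\widetilde{N}^X_\tau \to \widetilde{N}^X_{\tau_j}$) and exhibiting an explicit morphism of simplicial schemes $\mathcal{U}_X \to \BOT_X$; this is exactly the bookkeeping you anticipate, and once in place the comparison $\BOT_X \to \BOT_Y$ is a componentwise isogeny whose rational $K$-theory effect is handled by the Fundamental Theorem computation you describe.
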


There are several stages that go into the proof of Theorem \ref{main1}.  First, we extend the intuition of Proposition \ref{prop:aff} by proving that, for a complete simplicial toric variety $X$, $\KH(X)$ is determined by the $\K$-theory of algebraic tori (a simplicial scheme we call $\BOT_{X}$; see Definition \ref{BOTdef}). Then we determine a relationship between $\BOT_{X}$ and $\BOT_{Y}$ (for two complete simplicial toric varieties $X$ and $Y$), and use this to determine a relationship between $\KH(X) \otimes \Q$ and $\KH(Y) \otimes \Q$. We begin this construction in Section \ref{sect:BOT}.


\section{The Construction of the Simplicial Scheme $\BOT_{X}$}\label{sect:BOT}

As it turns out, simply knowing that $X$ and $Y$ have the same simplicial structure is not enough to determine that they have the same $\KH$-theory.  However, since $\KH$ satisfies Zariski descent, we can approach this problem from the perspective of simplicial schemes.  This is our next goal.


\begin{const}\label{simpscheme}

Let $X$ be a complete simplicial toric variety.  Then $X$ gives rise to a simplicial scheme, which we call $\mathcal{U}_{X}$.  To construct the simplicial scheme structure, we simply take the Zariski cover given by open sets associated to maximal cones in the fan of $X$, and then we take the \v{C}ech nerve of that cover.  For the remainder of this paper, when we say ``simplicial scheme associated to $X$'', we are referring to the simplicial scheme $\mathcal{U}_{X}$.


\end{const}

To extend the intuition of Proposition \ref{prop:aff}, we build a new simplicial scheme consisting of torus pieces associated to open sets of the form $U_{\sigma}$.  We begin with some new definitions.


\begin{defi}\label{facedegendef}

Let $X$ be a complete simplicial toric variety with associated simplicial scheme $\mathcal{U}_{X}$.  Let $\Delta_{X} \subset N^{X}$, and let $\widetilde{N}_{\tau}^{X} = N^{X}/N_{\tau}^{X}$.  We define two lattice maps, which we call $\widetilde{d_{j}}$ and $\widetilde{s_{j}}$, by the following construction.  Let $\tau = \sigma_{0} \cap \cdots \cap \sigma_{n}$ and $\tau_{j} = \sigma_{0} \cap \cdots \cap \widehat{\sigma_{j}} \cap \cdots \cap \sigma_{n}$.  Then $\widetilde{d_{j}}: \widetilde{N}_{\tau}^{X} \longrightarrow \widetilde{N}_{\tau_{j}}^{X}$ by lifting $\widetilde{N}_{\tau}^{X}$ to $N^{X}$, mapping $N^{X}$ to itself via the identity (with $\tau \longrightarrow \tau_{j}$ via inclusion), and taking canonical surjection onto $\widetilde{N}_{\tau_{j}}^{X}$.  Similarly, construct $\widetilde{s_{j}}: \widetilde{N}_{\tau}^{X} \longrightarrow \widetilde{N}_{\tau}^{X}$ by lifting $\widetilde{N}_{\tau}^{X}$ to $N^{X}$, mapping $N^{X}$ to itself via the identity (with $\tau \longrightarrow \tau$ via the identity), and taking canonical surjection onto $\widetilde{N}_{\tau}^{X}$.  Observe that, with this construction, $\widetilde{s_{j}}$ is the identity map.

\end{defi}

\begin{defi}\label{BOTdef}

Let $X$ be a complete simplicial toric variety with associated simplicial scheme $\mathcal{U}_{X}$ as shown in Construction \ref{simpscheme}.  We define a new simplicial scheme, which we call $\BOT_{X}$, by the following properties:

\begin{enumerate}

\item We define $(\BOT_{X})_{n} = \coprod T_{\alpha(\sigma_{0}, \cdots , \sigma_{n})}$, where $T_{\alpha(\sigma_{0}, \cdots , \sigma_{n})}$ is the associated torus piece for the open set $U_{\sigma_{0}} \cap \cdot \cdot \cdot \cap U_{\sigma_{n}}$ (in the sense of Definition \ref{toruspart2}) and $\alpha(\sigma_{0}, \cdots , \sigma_{n})$ is its rank.  As before, the $\sigma_{i}$'s are all maximal cones.

\item We define the face and degeneracy maps, denoted $d_{j}^{\BOT_{X}}$ and $s_{j}^{\BOT_{X}}$ respectively, to component-wise be the morphisms of toric varieties that are induced by the lattice maps $\widetilde{d_{j}}$ and $\widetilde{s_{j}}$ of Definition \ref{facedegendef}.
\end{enumerate}

\end{defi}

\begin{thm}\label{BOTproof}

$\BOT_{X}$, as defined in Definition \ref{BOTdef}, is a simplicial scheme.

\end{thm}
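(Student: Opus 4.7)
The plan is to verify that the face and degeneracy maps of $\BOT_X$ satisfy the simplicial identities. The verification reduces to the lattice level: since each $d_j^{\BOT_X}$ (resp.\ $s_j^{\BOT_X}$) is defined componentwise by applying the (covariant) functor $L \mapsto \Spec(R[\Hom(L,\Z)])$ to the lattice map $\widetilde{d_j}$ (resp.\ $\widetilde{s_j}$) of Definition \ref{facedegendef}, and this functor preserves compositions, it is enough to show that the assignment $(\sigma_0, \ldots, \sigma_n) \mapsto \widetilde{N}_\tau^X$ (where $\tau = \sigma_0 \cap \cdots \cap \sigma_n$) defines a simplicial object in the category of lattices, with face maps $\widetilde{d_j}$ and degeneracies $\widetilde{s_j}$.

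The key simplification is that $\widetilde{d_j}\colon \widetilde{N}_\tau^X \to \widetilde{N}_{\tau_j}^X$ is literally the canonical quotient surjection $N^X/N_\tau^X \twoheadrightarrow N^X/N_{\tau_j}^X$: because $\tau \subset \tau_j$ (intersecting fewer maximal cones can only enlarge the face), one has $N_\tau^X \subset N_{\tau_j}^X$, and the ``lift to $N^X$, apply the identity, project'' recipe of Definition \ref{facedegendef} is well-defined and agrees with the induced canonical surjection on quotients. Meanwhile $\widetilde{s_j}$ is the identity, as Definition \ref{facedegendef} already observes.

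With these two facts in hand, the simplicial identities are almost immediate. For the main face-only identity $\widetilde{d_i}\widetilde{d_j} = \widetilde{d_{j-1}}\widetilde{d_i}$ with $i<j$, both sides are compositions of canonical quotient surjections, and any such composition is the single canonical quotient onto the terminal target; both compositions terminate at $N^X/N_{\tau'}^X$, where $\tau'$ is the intersection of $\sigma_0, \ldots, \sigma_n$ after removing both $\sigma_i$ and $\sigma_j$ (the tuple-level reindexing in $\widetilde{d_{j-1}}\widetilde{d_i}$ versus $\widetilde{d_i}\widetilde{d_j}$ precisely cancels). The identity $\widetilde{s_i}\widetilde{s_j} = \widetilde{s_{j+1}}\widetilde{s_i}$ for $i \leq j$ reads $\id = \id$, and the mixed identities follow from $\widetilde{s_j} = \id$ combined with the fact that duplicating an entry of the tuple $(\sigma_0, \ldots, \sigma_n)$ leaves the intersection $\tau$ unchanged. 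The only real obstacle is bookkeeping — making sure that the tuple-level face operation shifts indices consistently on both sides of each identity so that they refer to the same set-theoretic removal of cones — which is just the standard combinatorics of a \v{C}ech nerve and requires no substantive calculation once the reduction to canonical quotient surjections is in place.
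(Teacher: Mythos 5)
Your verification is correct and is precisely the check the paper's proof defers to the reader as an easy exercise: the key point that $N_{\tau}^{X} \subset N_{\tau_{j}}^{X}$ makes $\widetilde{d_{j}}$ the canonical quotient surjection (and $\widetilde{s_{j}}$ the identity), after which the simplicial identities reduce to standard \v{C}ech-nerve bookkeeping and transfer to schemes by functoriality of $L \mapsto \Spec(R[\Hom(L,\Z)])$. Nothing is missing.
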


\begin{proof}

Definition \ref{BOTdef} has already given us our objects $(\BOT_{X})_{n}$ and our face and degeneracy maps, and it is an easy exercise to check that $d_{j}^{\BOT_{X}}$ and $s_{j}^{\BOT_{X}}$ are well-defined and satisfy the usual simplicial identities.

\end{proof}


Let $q_{n}^{\mathcal{U_{X}}}$ be the morphism of schemes obtained by projecting each component of $(\mathcal{U}_{X})_{n}$ onto its torus piece.  By it's very definition, this gives us a morphism $q_{n}^{\mathcal{U_{X}}} : (\mathcal{U}_{X})_{n} \longrightarrow (\BOT_{X})_{n}$ and we have:

\begin{thm}\label{morph}

The morphism $q^{\mathcal{U_{X}}} : \mathcal{U}_{X} \longrightarrow \BOT_{X}$ given in degree $n$ by $q_{n}^{\mathcal{U_{X}}} : (\mathcal{U}_{X})_{n} \longrightarrow (\BOT_{X})_{n}$ is a morphism of simplicial schemes.

\end{thm}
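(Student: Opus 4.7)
The plan is to verify, for each $n$ and each face/degeneracy index $j$, commutativity of the square
\[ q^{\mathcal{U}_X}_{n-1} \circ d_j \;=\; d_j^{\BOT_X} \circ q^{\mathcal{U}_X}_n, \]
together with its analogue for $s_j$. Since both $(\mathcal{U}_X)_n$ and $(\BOT_X)_n$ are coproducts indexed by $(n+1)$-tuples of maximal cones in $\Delta_X$, it suffices to check each square one component at a time. So fix an $(n+1)$-tuple $(\sigma_0, \ldots, \sigma_n)$ and set $\tau = \sigma_0 \cap \cdots \cap \sigma_n$ and $\tau_j = \sigma_0 \cap \cdots \cap \widehat{\sigma_j} \cap \cdots \cap \sigma_n$. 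Because $\Delta_X$ is a fan, $\tau$ is a common face of all of the $\sigma_i$, and standard toric gluing gives $U_{\sigma_0} \cap \cdots \cap U_{\sigma_n} = U_\tau$ with torus piece $T_\tau$; likewise for $\tau_j$.

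The next step is to translate the square into the dual lattice $M$. Writing $U_\tau = \Spec R[\tau^\vee \cap M]$ and, using $\Hom(\widetilde{N}_\tau, \Z) = \tau^\perp \cap M$, also $T_\tau = \Spec R[\tau^\perp \cap M]$, the projection $q^{\mathcal{U}_X}_n$ on this component corresponds on coordinate rings to the submonoid inclusion $\tau^\perp \cap M \hookrightarrow \tau^\vee \cap M$. Since $\tau \subseteq \tau_j$, we get $\tau_j^\vee \subseteq \tau^\vee$ and $\tau_j^\perp \subseteq \tau^\perp$ in $M_\R$, and $N_\tau \subseteq N_{\tau_j}$ on the primal side. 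The face map $d_j$ of $\mathcal{U}_X$ is the open immersion $U_\tau \hookrightarrow U_{\tau_j}$, i.e.\ is dual to $\tau_j^\vee \cap M \hookrightarrow \tau^\vee \cap M$, while $d_j^{\BOT_X}$ is, by Definition \ref{facedegendef}, induced by the canonical surjection $\widetilde{d_j}: \widetilde{N}_\tau \to \widetilde{N}_{\tau_j}$, which under $\Hom(-, \Z)$ dualizes to the inclusion $\tau_j^\perp \cap M \hookrightarrow \tau^\perp \cap M$.

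With this dictionary, the face compatibility reduces to commutativity of a square of submonoid inclusions inside $M$; both composites are equal to the inclusion $\tau_j^\perp \cap M \hookrightarrow \tau^\vee \cap M$, so the square commutes tautologically. The degeneracy compatibility is even easier: by construction $\widetilde{s_j}$ is the identity on $\widetilde{N}_\tau$, so $s_j^{\BOT_X}$ is the identity on $T_\tau$; the degeneracy on the \v{C}ech nerve $\mathcal{U}_X$ is likewise the identity $U_\tau \to U_\tau$, since reindexing $(\sigma_0, \ldots, \sigma_n) \mapsto (\sigma_0, \ldots, \sigma_j, \sigma_j, \ldots, \sigma_n)$ does not alter the intersection. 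Hence both composites equal $q^{\mathcal{U}_X}_n$.

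I do not anticipate a serious obstacle: the substantive content has already been absorbed into Definitions \ref{facedegendef} and \ref{BOTdef}, so what remains is essentially bookkeeping. The one point worth double-checking is that the torus projection $U_\tau \to T_\tau$ of Proposition \ref{toruspart1} really does correspond on coordinate rings to the submonoid inclusion $\tau^\perp \cap M \hookrightarrow \tau^\vee \cap M$; once that identification is in place, all of the simplicial identities for $q^{\mathcal{U}_X}$ follow immediately from the elementary lattice inclusions above.
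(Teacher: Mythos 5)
Your proof is correct and follows the same route the paper intends: the paper's own proof simply asserts that commutativity with the face and degeneracy maps is ``straightforward by the construction,'' and your argument supplies exactly the bookkeeping that claim elides (identifying $U_{\sigma_0}\cap\cdots\cap U_{\sigma_n}=U_\tau$, dualizing to the inclusions $\tau_j^\perp\cap M\hookrightarrow\tau^\perp\cap M\hookrightarrow\tau^\vee\cap M$, and checking both composites agree). No further comment is needed.
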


\begin{proof}

This is straightforward by the construction of the maps $d_{j}^{\BOT_{X}}$ and $s_{j}^{\BOT_{X}}$.

\end{proof}

\begin{thm}\label{Weq}

The morphisms $\KH(q^{\mathcal{U_{X}}}) : \KH(\BOT_{X}) \longrightarrow \KH(\mathcal{U}_{X})$ and $\KH(q^{\mathcal{U_{X}}}) \otimes \id_{\Q} : \KH(\BOT_{X}) \otimes \Q \longrightarrow \KH(\mathcal{U}_{X}) \otimes \Q$ of cosimplicial spectra induced by the map $q^{\mathcal{U_{X}}}$ of Theorem \ref{morph} are weak equivalences.

\end{thm}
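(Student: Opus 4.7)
My plan is to prove this by reducing to a levelwise statement and then invoking Zariski descent for $\KH$. In simplicial degree $n$, $(\mathcal{U}_X)_n$ is the disjoint union, over tuples $(\sigma_0,\ldots,\sigma_n)$ of maximal cones, of the affine opens $U_\tau$ with $\tau = \sigma_0 \cap \cdots \cap \sigma_n$, while $(\BOT_X)_n$ is the disjoint union of the corresponding torus pieces $T_\tau$. By construction, the component of $q_n^{\mathcal{U_X}}$ on each summand is the projection $U_\tau \to T_\tau$ arising from the decomposition $U_\tau \cong U_{\tau'} \times T_\tau$ of Proposition \ref{toruspart1}. Since $\KH$ carries finite disjoint unions of schemes to products of spectra, and since Proposition \ref{prop:aff} says each such projection induces a weak equivalence $\KH(T_\tau) \to \KH(U_\tau)$, the map $\KH(q_n^{\mathcal{U_X}})$ is a weak equivalence of spectra for every $n$; that is, $\KH(q^{\mathcal{U_X}})$ is a levelwise weak equivalence of cosimplicial spectra.

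To promote this to a weak equivalence on totalizations, I would use that $\KH$ satisfies Zariski descent, so that $\KH$ of a simplicial scheme $\mathcal{Y}$ is the totalization (equivalently the $\holim$) of the cosimplicial spectrum $[n] \mapsto \KH(\mathcal{Y}_n)$. The associated descent spectral sequence $E_2^{p,q} = \pi^p \pi_{-q} \KH(\mathcal{Y}_\bullet) \Rightarrow \KH_{-p-q}(\mathcal{Y})$ is natural in $\mathcal{Y}$, so a levelwise weak equivalence of cosimplicial spectra induces an isomorphism on $E_2$ and hence on the abutment; this gives the first statement. The rational assertion is then immediate, since tensoring with $\Q$ levelwise preserves weak equivalences of spectra and commutes with totalization, so exactly the same argument applies to the rationalized cosimplicial spectrum.

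The only delicate point will be the identification used in the first paragraph, namely that the components of $q_n^{\mathcal{U_X}}$ genuinely coincide with the torus-piece projections appearing in Proposition \ref{prop:aff}. This amounts to unwinding Definition \ref{facedegendef}: one has to trace the lattice maps $\widetilde{d_j}$ and $\widetilde{s_j}$ through the identification $U_\tau \cong U_{\tau'} \times T_\tau$ and check compatibility with the simplicial structure of $\mathcal{U}_X$, so that at each level the map really is the torus-piece projection on each component. Once this identification is in hand, everything else is formal from Zariski (hyper)descent for $\KH$ and the naturality of the descent spectral sequence.
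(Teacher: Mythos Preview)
Your first paragraph is the entire proof, and it is exactly the paper's argument: apply Proposition~\ref{prop:aff} componentwise in each simplicial degree. A weak equivalence of cosimplicial spectra means a levelwise weak equivalence, so once each $\KH(q_n^{\mathcal{U}_X})$ is a weak equivalence you are done. Your second paragraph, which promotes the levelwise statement to totalizations via descent and a spectral sequence, is not part of Theorem~\ref{Weq} at all; that is the content of the subsequent Corollary~\ref{holim} (and the comparison with $\KH(X)$ via Zariski descent is Theorem~\ref{weaklyequiv}). So you have conflated two separate statements and done more work than this theorem requires.

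The ``delicate point'' you raise in your third paragraph is also not an issue. By definition (see the sentence preceding Theorem~\ref{morph}), $q_n^{\mathcal{U}_X}$ is the morphism obtained by projecting each component of $(\mathcal{U}_X)_n$ onto its torus piece; there is nothing to check about $\widetilde{d_j}$ or $\widetilde{s_j}$ for the levelwise weak-equivalence claim. Compatibility with the simplicial structure is the content of Theorem~\ref{morph}, which you are allowed to assume here.
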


\begin{proof}

This follows by applying Proposition \ref{prop:aff} componentwise.

\end{proof}

\begin{cor}\label{holim}

The morphisms $\holim(\KH(\BOT_{X})) \longrightarrow \holim(\KH(\mathcal{U}_{X}))$ and $\holim(\KH(\BOT_{X}) \otimes \Q) \longrightarrow \holim(\KH(\mathcal{U}_{X}) \otimes \Q)$ are weak equivalences of $\Spectra$.

\end{cor}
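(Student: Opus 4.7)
The plan is to deduce Corollary \ref{holim} as a formal consequence of Theorem \ref{Weq} by invoking the homotopy invariance of the homotopy limit functor on cosimplicial spectra. The principle is standard: $\holim$ is constructed precisely so that a levelwise weak equivalence between cosimplicial diagrams (of suitably fibrant, or at least objectwise fibrant, spectra) induces a weak equivalence on homotopy limits. Thus the real content has already been packaged into Theorem \ref{Weq}, and Corollary \ref{holim} should essentially follow by applying $\holim$ to both sides.

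Concretely, I would proceed in three short steps. First, I would recall Theorem \ref{Weq}, which tells us that the map of cosimplicial spectra $\KH(q^{\mathcal{U_X}}): \KH(\BOT_X) \to \KH(\mathcal{U}_X)$ is a weak equivalence in each cosimplicial degree $n$, since in each degree it is a disjoint union of maps of the form $\KH(T_\sigma) \to \KH(U_\sigma)$ for cones $\sigma$, and Proposition \ref{prop:aff} tells us each such component map is a weak equivalence of spectra. Second, I would invoke the homotopy invariance property: for cosimplicial spectra, $\holim$ sends levelwise weak equivalences to weak equivalences (assuming we work in a model where the cosimplicial spectra are objectwise fibrant, which one can arrange by functorial fibrant replacement without loss of generality since $\KH$ can be represented by fibrant spectra via its Bousfield--Thomason model). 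Applying this to $\KH(q^{\mathcal{U_X}})$ gives the first claimed weak equivalence $\holim(\KH(\BOT_X)) \to \holim(\KH(\mathcal{U}_X))$.

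For the second assertion, the rational version, I would note that Theorem \ref{Weq} already records that $\KH(q^{\mathcal{U_X}}) \otimes \id_\Q$ is a levelwise weak equivalence of cosimplicial spectra (this is automatic since rationalization, being a smashing localization with the Moore spectrum $S\Q$, preserves weak equivalences). Then the same homotopy invariance argument applied to the cosimplicial object $\KH(\BOT_X) \otimes \Q \to \KH(\mathcal{U}_X) \otimes \Q$ yields the second claimed weak equivalence on homotopy limits.

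There is no real obstacle here; the only minor point one must be careful about is ensuring that the homotopy limit is computed in a model-theoretic setting that actually respects weak equivalences, i.e., either working with objectwise fibrant cosimplicial spectra or using a corrected/derived $\holim$. Once that bookkeeping is done, the result is immediate from Theorem \ref{Weq}, which is why it is phrased as a corollary rather than a theorem.
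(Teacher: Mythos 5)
Your argument is correct and matches the paper's proof: the paper likewise deduces the corollary immediately from Theorem \ref{Weq} together with the Bousfield--Kan construction $\holim(-) = \Tot(\Pi^{*}(R(-)))$, whose built-in fibrant and cosimplicial replacement is exactly the ``bookkeeping'' you flag to ensure that levelwise weak equivalences are preserved.
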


\begin{proof}

This is immediate from Theorem \ref{Weq} and the construction of the $\holim$ functor given in \cite{BK}.  For our purposes, $\holim(-) = \Tot(\Pi^{*}(R(-)))$, where $R$ denotes fibrant replacement, $\Pi^{*}$ denotes cosimplicial replacement, and $\Tot$ is the total object functor.

\end{proof}

This allows us to conclude the following important consequence for $\BOT_{X}$.

\begin{thm}\label{weaklyequiv}

Let $X$ be a complete simplicial toric variety over a ring $R$.  Then the two spectra $\holim(\KH(\BOT_{X}))$ and $\KH(X)$ are weakly equivalent.

\end{thm}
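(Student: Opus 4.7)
The plan is to reduce the theorem to two independent inputs: descent for $\KH$-theory applied to the Zariski cover used in Construction \ref{simpscheme}, and Corollary \ref{holim}. In outline, I would produce a zigzag of weak equivalences of spectra
\begin{equation*}
\KH(X) \;\longrightarrow\; \holim(\KH(\mathcal{U}_{X})) \;\longleftarrow\; \holim(\KH(\BOT_{X})),
\end{equation*}
where the left arrow is the descent map induced by the augmentation $\coprod_{\sigma \ \mathrm{maximal}} U_{\sigma} \twoheadrightarrow X$ and the right arrow is $\holim(\KH(q^{\mathcal{U}_{X}}))$ from Corollary \ref{holim}. Composing (after fibrant replacement) yields the claimed weak equivalence.

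For the left-hand arrow, the key point is that $\KH$ satisfies Zariski descent in the strong sense that for any Noetherian scheme and any finite Zariski cover, the natural map from $\KH$ of the scheme to the totalization of $\KH$ applied to the \v{C}ech nerve is a weak equivalence; this is part of Weibel's foundational work on homotopy $\K$-theory and is implicit in \cite{Wei1}. Since the open sets $U_{\sigma}$ for $\sigma$ ranging over the maximal cones of $\Delta_{X}$ form a finite Zariski cover of $X$ (this uses only that $\Delta_{X}$ is finite, not that $X$ is simplicial or complete), the descent statement gives precisely the left arrow as a weak equivalence. For the right-hand arrow, Corollary \ref{holim} is exactly the statement we need, and was obtained by applying Proposition \ref{prop:aff} one simplicial degree at a time and then taking homotopy limits.

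The only genuine obstacle is invoking $\KH$ Zariski descent correctly; once that is in place the theorem is formal. Note that neither the completeness of $X$ nor the simplicial hypothesis is used in this step — both are needed only to make $\BOT_{X}$ (and hence the right-hand arrow) well-defined and tractable via Definitions \ref{facedegendef} and \ref{BOTdef}. This clean separation is why Theorem \ref{weaklyequiv} can serve as the bridge between the geometric side ($\KH(X)$) and the combinatorial side ($\BOT_{X}$) that the proof of Theorem \ref{main1} is being set up to exploit.
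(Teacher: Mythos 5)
Your proposal is correct and follows essentially the same route as the paper: the identical zigzag $\holim(\KH(\BOT_{X})) \longrightarrow \holim(\KH(\mathcal{U}_{X})) \longleftarrow \KH(X)$, with the left leg supplied by Corollary \ref{holim} and the right leg by Zariski descent for $\KH$. Your added remarks — that descent needs only finiteness of the fan, while completeness and simpliciality enter only in making $\BOT_{X}$ well-defined — are accurate elaborations rather than a different argument.
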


\begin{proof}

By Corollary \ref{holim} and the fact that $\KH$ satisfies Zariski descent, we get a zig-zag \bea \holim(\KH(\BOT_{X})) \longrightarrow \holim(\KH(\mathcal{U}_{X})) \longleftarrow \KH(X) \eea where each map is a weak equivalence; thus the spectra $\holim(\KH(\BOT_{X}))$ and $\KH(X)$ are weakly equivalent as claimed.

\end{proof}

Theorem \ref{weaklyequiv} completes our generalization of the intuition we presented at the end of Section \ref{sect:simp}, showing that $\KH_{n}(X)$ is indeed determined only by the torus pieces of open sets associated to maximal cones covering $X$ as claimed.



\section{The Proof of Theorem \ref{main1}}\label{sect:main}

With the work of Sections \ref{sect:simp} and \ref{sect:BOT}, we are now ready to prove Theorem \ref{main1}.  Section \ref{sect:BOT} focused on the construction of $\BOT_{X}$ for a given complete simplicial toric variety $X$.  Similarly, we could construct $\BOT_{Y}$ for a different complete simplicial toric variety $Y$.  The question is: when are these two simplicial schemes related?  In general this is not known; however, if we impose the conditions that the fans of $X$ and $Y$ are rationally isomorphic via a rational linear automorphism $F_{\R} : N_{\R}^{X} \longrightarrow N_{\R}^{Y}$, and $X$ and $Y$ are defined over a regular ring $R$, then we get a very useful relationship.


\begin{lem}\label{propres}

Let $X$ and $Y$ be two complete simplicial toric varieties whose fans are isomorphic via a rational linear automorphism $F_{\R} : N_{\R}^{X} \longrightarrow N_{\R}^{Y}$ and denote the induced mapping on cones by $\varphi$ (so that $\sigma \mapsto \varphi(\sigma)$).  Then the induced map $\left(\widetilde{F}_{\sigma} \right)_{\R} : \left(\widetilde{N}_{\sigma}^{X} \right)_{\R} \longrightarrow \left( \widetilde{N}_{\varphi (\sigma)}^{Y} \right)_{\R}$ is also a rational linear automorphism.



\end{lem}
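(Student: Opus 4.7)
The plan is to exploit the fact that the subspaces $(N_{\sigma}^{X})_{\R}$ and $(N_{\varphi(\sigma)}^{Y})_{\R}$ have an intrinsic description as the $\R$-spans of cones whose ray generators are lattice vectors, and that $F_{\R}$ — being a rational linear isomorphism that carries $\sigma$ onto $\varphi(\sigma)$ — automatically carries one subspace onto the other. Once this is set up, passing to the quotient is essentially a routine diagram chase, with rationality of the induced map inherited from that of $F_{\R}$.

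First I would unpack the definition of $N_{\sigma}^{X}$. Writing $\sigma = \mathrm{Cone}(v_{1},\ldots,v_{k})$ with each $v_{i}\in N^{X}$, the lattice $N_{\sigma}^{X} = (\sigma\cap N^{X}) + (-\sigma\cap N^{X})$ has $\R$-span equal to $\mathrm{Span}_{\R}(v_{1},\ldots,v_{k})$, so $(N_{\sigma}^{X})_{\R}\subset N_{\R}^{X}$ is precisely the rational subspace spanned by the ray generators of $\sigma$. Exactly the same description holds for $(N_{\varphi(\sigma)}^{Y})_{\R}\subset N_{\R}^{Y}$. Both subspaces are therefore defined over $\Q$ in an obvious way, which will be the source of rationality later.

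Next, since $F_{\R}$ is a fan isomorphism with $F_{\R}(\sigma)=\varphi(\sigma)$, it sends the $\R$-linear span of $\sigma$ bijectively onto the $\R$-linear span of $\varphi(\sigma)$. Thus $F_{\R}$ restricts to a linear isomorphism $(N_{\sigma}^{X})_{\R}\xrightarrow{\sim}(N_{\varphi(\sigma)}^{Y})_{\R}$. Composing $F_{\R}$ with the canonical projection $N_{\R}^{Y}\twoheadrightarrow(\widetilde{N}_{\varphi(\sigma)}^{Y})_{\R}$ yields a surjection $N_{\R}^{X}\twoheadrightarrow(\widetilde{N}_{\varphi(\sigma)}^{Y})_{\R}$ whose kernel is exactly $(N_{\sigma}^{X})_{\R}$, which therefore descends to a well-defined linear isomorphism $(\widetilde{F}_{\sigma})_{\R}:(\widetilde{N}_{\sigma}^{X})_{\R}\xrightarrow{\sim}(\widetilde{N}_{\varphi(\sigma)}^{Y})_{\R}$. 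Rationality is then immediate: since $F_{\R}$ sends $N_{\Q}^{X}$ into $N_{\Q}^{Y}$ and the two subspaces we quotient by are $\Q$-rational with $F_{\R}$ identifying them, the induced map sends $\widetilde{N}_{\sigma}^{X}\otimes\Q$ into $\widetilde{N}_{\varphi(\sigma)}^{Y}\otimes\Q$.

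There is no real obstacle here — the content of the lemma is essentially the observation that the short exact sequence $0\to(N_{\sigma}^{X})_{\R}\to N_{\R}^{X}\to(\widetilde{N}_{\sigma}^{X})_{\R}\to 0$ is compatible with $F_{\R}$ under $\sigma\mapsto\varphi(\sigma)$. The only bookkeeping that requires care is verifying that the restriction of $F_{\R}$ to $(N_{\sigma}^{X})_{\R}$ lands inside (and surjects onto) $(N_{\varphi(\sigma)}^{Y})_{\R}$; this is exactly where the description of these subspaces as $\R$-spans of ray generators is used, and it is immediate once the fan isomorphism $\varphi$ is set up compatibly with $F_{\R}$.
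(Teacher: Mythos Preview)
Your argument is correct and cleanly proves the lemma as stated, but it proceeds differently from the paper. The paper works at the integral lattice level: after clearing denominators so that $F$ maps $N^{X}$ into $N^{Y}$, one applies the Snake Lemma to the commutative diagram of short exact sequences $0\to N_{\sigma}^{X}\to N^{X}\to \widetilde{N}_{\sigma}^{X}\to 0$ and $0\to N_{\varphi(\sigma)}^{Y}\to N^{Y}\to \widetilde{N}_{\varphi(\sigma)}^{Y}\to 0$ to deduce that the integral map $\widetilde{F}_{\sigma}:\widetilde{N}_{\sigma}^{X}\to\widetilde{N}_{\varphi(\sigma)}^{Y}$ is injective with finite cokernel; tensoring with $\R$ then gives the rational linear automorphism. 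Your approach instead stays at the real/rational level throughout, using the description of $(N_{\sigma}^{X})_{\R}$ as the $\R$-span of the ray generators and the fact that $F_{\R}$ carries this span onto $(N_{\varphi(\sigma)}^{Y})_{\R}$, then passing to the quotient directly. Your route is arguably more transparent for the lemma itself, but note that the paper's integral statement ``injective with finite cokernel'' is precisely what is invoked in the proof of Theorem~\ref{isogeny} to conclude that the induced maps on torus pieces are isogenies; your argument, as written, does not supply that integral conclusion, so you would need to add it (or a separate argument) when you reach that point.
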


\begin{proof}

One need only show that the induced map $\widetilde{F}_{\sigma} : \widetilde{N}_{\sigma}^{X} \longrightarrow \widetilde{N}_{\varphi (\sigma)}^{Y}$ is injective with finite cokernel, which is an easy consequence of the Snake Lemma.


\end{proof}


\begin{thm}\label{isogeny}

Let $X$ and $Y$ be two complete simplicial toric varieties whose fans are isomorphic via a rational linear automorphism $F_{\R} : N_{\R}^{X} \longrightarrow N_{\R}^{Y}$ and denote the induced mapping on cones by $\varphi$ (so that $\sigma \mapsto \varphi(\sigma)$).  Then $F$ (obtained from $F_{\R}$ by clearing denominators) induces a morphism of simplicial schemes $\BOT_{X} \longrightarrow \BOT_{Y}.$  Furthermore, for every $n$, the induced morphism $(\BOT_{X})_{n} \longrightarrow (\BOT_{Y})_{n}$ is, in each component, an isogeny.


\end{thm}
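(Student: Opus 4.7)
The plan is to construct the morphism componentwise using the lattice map obtained from $F_{\R}$, verify the simplicial identities by reducing them to a single compatibility condition at the level of $N^{X}$ and $N^{Y}$, and then deduce the isogeny property by dualizing.

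First, I would set up the componentwise data. Since $F_{\R}$ is a rational linear automorphism, clearing denominators produces an honest lattice homomorphism $F : N^{X} \to N^{Y}$ that is injective with finite cokernel. For any tuple $(\sigma_{0},\dots,\sigma_{n})$ of maximal cones of $\Delta_{X}$, set $\tau = \sigma_{0}\cap\cdots\cap\sigma_{n}$; since $F_{\R}$ carries $\tau$ onto $\varphi(\tau) = \varphi(\sigma_{0})\cap\cdots\cap\varphi(\sigma_{n})$, the map $F$ sends $N_{\tau}^{X}$ into $N_{\varphi(\tau)}^{Y}$, so passing to quotients yields $\widetilde{F}_{\tau} : \widetilde{N}_{\tau}^{X} \to \widetilde{N}_{\varphi(\tau)}^{Y}$. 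By Lemma \ref{propres} this is injective with finite cokernel between free abelian groups of the same rank. Applying $\Spec(R[\Hom(-,\Z)])$ gives a morphism of schemes $T_{\tau}^{X} \to T_{\varphi(\tau)}^{Y}$, and taking the coproduct over all tuples defines $q_{n}^{F} : (\BOT_{X})_{n} \to (\BOT_{Y})_{n}$.

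Next I would check that the collection $\{q_{n}^{F}\}$ assembles into a morphism of simplicial schemes, i.e.\ that it commutes with the face and degeneracy operators of Definition \ref{facedegendef}. The point is that both $\widetilde{d_{j}}$ and $\widetilde{s_{j}}$ are defined by lifting to $N$, applying the identity, and projecting to the relevant quotient. Consequently, each required square of lattice maps factors through the single commutative square in which $F : N^{X} \to N^{Y}$ is composed with the identity on either side; the compatibility of $F$ with the filtration by the sublattices $N_{\tau}^{X}, N_{\tau_{j}}^{X}$ (and their images in $N^{Y}$) then forces the diagrams of quotient maps to commute. Dualizing and passing to $\Spec$ reverses the arrows but preserves commutativity, which gives the simplicial identity $d_{j}^{\BOT_{Y}} \circ q_{n}^{F} = q_{n-1}^{F} \circ d_{j}^{\BOT_{X}}$ and the analogous identity for degeneracies.

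Finally, I would deduce the isogeny statement. Each $\widetilde{F}_{\tau}$ is an injection of free abelian groups of equal rank with finite cokernel, so in suitable bases it is represented by a diagonal integer matrix with nonzero entries. Applying $\Hom(-,\Z)$ therefore yields a map $\Hom(\widetilde{N}_{\varphi(\tau)}^{Y},\Z) \hookrightarrow \Hom(\widetilde{N}_{\tau}^{X},\Z)$ which is again injective with finite cokernel of the same order. The induced morphism of tori $T_{\tau}^{X} \to T_{\varphi(\tau)}^{Y}$ is then a surjective homomorphism of algebraic groups with finite kernel, hence an isogeny.

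The main obstacle is not any single step — each is routine — but rather bookkeeping the three different layers at once: the combinatorial data ($\varphi$ on cones), the lattice data ($F$ and its quotients $\widetilde{F}_{\tau}$), and the scheme-theoretic data (the tori $T_{\tau}$ and their morphisms). The verification of the simplicial identities in particular requires care, because one must track how the face maps shuffle the intersection $\tau = \sigma_{0}\cap\cdots\cap\sigma_{n}$ on both sides while simultaneously ensuring the quotient-map diagrams commute; the economy of the argument comes from pushing everything back to the single lattice map $F$ before it is split up by the various quotients.
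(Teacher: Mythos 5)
Your proposal is correct and follows essentially the same route as the paper: construct $\widetilde{F}_{\tau}$ by passing $F$ to the quotient lattices, invoke Lemma \ref{propres} for injectivity with finite cokernel, and dualize to get an isogeny of tori. The paper leaves the compatibility with face and degeneracy maps as an exercise, whereas you correctly reduce it to the commutativity of a single square involving $F$ on the ambient lattices; this is exactly the intended argument.
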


\begin{proof}

As we've already seen, $\widetilde{N}_{\sigma}^{X}$ and $\widetilde{N}_{\varphi (\sigma)}^{Y}$ determine the torus pieces of $U^{X}_{\sigma}$ and $U^{Y}_{\varphi (\sigma)}$, and so $\widetilde{F}_{\sigma} : \widetilde{N}_{\sigma}^{X} \longrightarrow \widetilde{N}_{\varphi (\sigma)}^{Y}$ determines a morphism between the tori $T^{X}_{\sigma}$ and $T^{Y}_{\varphi (\sigma)}$.  Since $\widetilde{F}_{\sigma}$ is injective with finite cokernel (Lemma \ref{propres}), the induced map on the corresponding tori is an isogeny; therefore, the morphism $(\BOT_{X})_{n} \longrightarrow (\BOT_{Y})_{n}$ is, in each component, an isogeny.  The fact that the maps $(\BOT_{X})_{n} \longrightarrow (\BOT_{Y})_{n}$ commute with the face and degeneracy maps is an easy exercise left to the reader.

\end{proof}




\begin{cor}\label{inducedmap1}

The morphism of Theorem \ref{isogeny} induces a morphism of cosimplicial spectra $\KH(\BOT_{Y}) \longrightarrow \KH(\BOT_{X})$ which is, component-wise in each degree, given by induced morphisms $f^{*}$ where $f$ is an isogeny of tori.  Similarly, the morphism of Theorem \ref{isogeny} induces a morphism of cosimplicial spectra $\KH(\BOT_{Y}) \otimes \Q \longrightarrow \KH(\BOT_{X}) \otimes \Q$ which is, component-wise in each degree, given by induced morphisms $(f^{*})_{\Q}$.

\end{cor}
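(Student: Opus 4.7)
The plan is to observe that Corollary \ref{inducedmap1} is essentially a formal consequence of the contravariant functoriality of $\KH$ combined with the simplicial morphism produced in Theorem \ref{isogeny}. I will not need any new geometric input; the work is purely categorical/naturality bookkeeping, with the nontrivial content already packaged into Theorem \ref{isogeny}.

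First I would apply the functor $\KH(-)$ (which is contravariant on schemes) componentwise to the morphism of simplicial schemes $\BOT_X \to \BOT_Y$. Since $(\BOT_X)_n$ and $(\BOT_Y)_n$ are disjoint unions of tori, and $\KH$ carries disjoint unions to products of spectra, the induced map in degree $n$ is a product of the maps $f^*: \KH(T^Y_{\varphi(\sigma)}) \to \KH(T^X_\sigma)$ where $f : T^X_\sigma \to T^Y_{\varphi(\sigma)}$ is the isogeny associated to $\widetilde{F}_\sigma$. This already gives the componentwise description claimed in the statement.

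Next I would verify that the collection of maps $\{\KH((\BOT_X)_n) \leftarrow \KH((\BOT_Y)_n)\}_{n\geq 0}$ assembles into a morphism of cosimplicial spectra. This requires that the induced maps commute with the coface and codegeneracy maps of $\KH(\BOT_X)$ and $\KH(\BOT_Y)$. But the coface and codegeneracy maps are by definition $\KH$ applied to $d_j^{\BOT_X}, s_j^{\BOT_X}, d_j^{\BOT_Y}, s_j^{\BOT_Y}$, so the required compatibility is just $\KH$ applied to the identities that make $\BOT_X \to \BOT_Y$ a simplicial morphism (these identities were established in the last sentence of the proof of Theorem \ref{isogeny}). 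Functoriality of $\KH$ converts them into the cosimplicial identities needed.

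Finally, for the rational statement, I would postcompose with the smashing localization $(-) \otimes \Q$ on $\Spectra$. Since $(-)\otimes\Q$ is a (left) functor on spectra, applying it to a morphism of cosimplicial spectra produces a morphism of cosimplicial spectra, whose $n$-th component is obtained from the integral one by tensoring with $\Q$; this is what it means to write $(f^*)_\Q$ in each component. The main ``obstacle'' here is really just to make sure none of these formal steps is being glossed over unjustifiably; there is no hard mathematical content beyond functoriality of $\KH$ and the simplicial statement already recorded in Theorem \ref{isogeny}.
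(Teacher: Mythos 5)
Your proposal is correct and is exactly the formal functoriality argument the paper has in mind — indeed the paper gives no proof of this corollary at all, treating it as an immediate consequence of applying $\KH(-)$ (and then $(-)\otimes\Q$) componentwise to the simplicial morphism of Theorem \ref{isogeny}. Your write-up just makes the implicit bookkeeping explicit, so there is nothing to add.
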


Our next goal is to show that, if our base ring is in fact regular, the morphism of cosimplicial spectra $\KH(\BOT_{Y}) \otimes \Q \longrightarrow \KH(\BOT_{X}) \otimes \Q$ induced by the morphism of Theorem \ref{isogeny} is a weak equivalence. This follows from the following theorem (suggested by an anonymous referee):

\begin{thm}\label{fixedpoint4}

Let $R$ be a regular ring, $d$ a natural number, and $\alpha: \Z^{d} \longrightarrow \Z^{d}$ an injective group homomorphism.  Then the induced morphism \bea \alpha_{n}: \K_{n}\left( R[\Z^{d}] \right) \otimes \Q \longrightarrow \K_{n}\left( R[\Z^{d}] \right) \otimes \Q \nonumber \eea is an automorphism.  As a consequence, if $f$ is an isogeny of algebraic tori, then $(f^{*})_{\Q}$ is an isomorphism.

\end{thm}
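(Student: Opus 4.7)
The plan is to reduce, via Smith normal form, to a computation using iterated Bass--Heller--Swan (BHS). First, I would factor $\alpha = A \circ D \circ B$ with $A,B \in \GL_d(\Z)$ and $D = \mathrm{diag}(m_1,\dots,m_d)$; injectivity of $\alpha$ forces every $m_i \neq 0$. The group automorphisms $A$ and $B$ extend to $R$-algebra automorphisms of $R[\Z^d]$, so they induce honest $\K$-theoretic isomorphisms. Thus it suffices to show that the endomorphism of $\K_n(R[\Z^d]) \otimes \Q$ induced by the diagonal map $D$ is an automorphism.

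For the diagonal case, I would apply BHS iteratively in each variable. Since $R$ is regular, every intermediate Laurent polynomial ring $R[t_1^{\pm 1},\dots,t_i^{\pm 1}]$ is regular, so the $NK$-terms vanish and one obtains the natural direct-sum decomposition
\[
\K_n(R[\Z^d]) \;\cong\; \bigoplus_{S \subseteq \{1,\dots,d\}} \K_{n-|S|}(R).
\]
The key single-variable input is that $\phi_m : R[t^{\pm 1}] \to R[t^{\pm 1}]$, $t \mapsto t^m$, acts as the identity on the $\K_n(R)$-summand (which comes from $R \hookrightarrow R[t^{\pm 1}]$, a map preserved by $\phi_m$) and as multiplication by $m$ on the $\K_{n-1}(R)$-summand (generated by cup products with $[t] \in \K_1$, and $[t^m] = m\cdot [t]$ in the $\K_0(R)$-part of $\K_1(R[t^{\pm 1}])$). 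Writing $D = D_1 \circ \cdots \circ D_d$ as a composition of commuting single-variable endomorphisms $D_i : t_i \mapsto t_i^{m_i}$ and appealing to naturality of BHS in the base ring, $D$ acts on the summand indexed by $S$ by multiplication by the nonzero integer $\prod_{i\in S} m_i$; hence $D_n \otimes \Q$ is an automorphism.

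The main obstacle I anticipate is verifying the multiplicativity and compatibility claims underpinning the iterated decomposition: namely, that successive applications of BHS yield a canonical splitting indexed by subsets of $\{1,\dots,d\}$, and that each $D_i$ interacts with this splitting precisely as stated. This reduces to naturality of BHS through the tower of regular Laurent polynomial rings plus the identification of the nontrivial summand with cup product by $[t_i]$, but the bookkeeping needs care.

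For the consequence: an isogeny $f : T_1 \to T_2$ between split $d$-dimensional tori over $R$ corresponds contravariantly to an injective homomorphism of character lattices $M_2 \hookrightarrow M_1$ with finite cokernel (both lattices isomorphic to $\Z^d$). Therefore $f^*$ on $\K$-theory is precisely a map of the type considered in the theorem, and $(f^*)_\Q$ is an isomorphism.
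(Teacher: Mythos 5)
Your proposal is correct and follows essentially the same route as the paper's proof: reduce by Smith normal form to a diagonal map, decompose $\K_{n}(R[\Z^{d}])$ via iterated Bass--Heller--Swan (the ``Fundamental Theorem,'' with $N\K$-terms vanishing by regularity of $R$) into summands indexed by subsets of the variables, and observe that $t_{i}\mapsto t_{i}^{m_{i}}$ acts on the summand indexed by $S$ as multiplication by $\prod_{i\in S}m_{i}$ since $[t^{m}]=m[t]$ in $\K_{1}$. The only difference is that you spell out the diagonalization and the naturality bookkeeping that the paper leaves implicit.
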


\begin{proof}

Begin by noting that, via a standard diagonalization argument, we may assume without loss of generality that $\alpha (e_{i}) = m_{i} e_{i}$ for $i=1,...,d$, where $m_{i} \in \N$ for all $i$, and the $e_{i}$ are the standard basis vectors.  By inductively applying the Fundamental Theorem, we have \bea \K_{n}\left(R[\Z^{d}]\right) = \K_{n}\left( R[t_{1}, t_{1}\inv, ..., t_{d}, t_{d}\inv] \right) = \bigoplus_{r \leq d} \{t_{j_{1}}, t_{j_{2}},..., t_{j_{r}} \} \cdot \K_{n-r}(R) \nonumber \eea where $\{t_{a} , t_{b}\}$ denotes the cup product of $[t_{a}]$ and $[t_{b}]$ (viewed as elements of $\K_{1}\left( R[t_{1}, t_{1}\inv, ..., t_{d}, t_{d}\inv] \right)$ as usual).  Then on each graded component of $\K_{n}\left( R[\Z^{d}] \right)$, $\alpha$ induces the morphism \bea \{t_{j_{1}}, t_{j_{2}},..., t_{j_{r}} \} \cdot y & \mapsto & \{t_{j_{1}}^{m_{j_{1}}}, t_{j_{2}}^{m_{j_{2}}},..., t_{j_{r}}^{m_{j_{r}}} \} \cdot y \nonumber \\
& = & m_{j_{1}}m_{j_{2}} \cdots m_{j_{r}} \left( \{t_{j_{1}}, t_{j_{2}},..., t_{j_{r}} \} \cdot y \right) \nonumber \eea which implies that $\alpha$ induces a morphism which looks like multiplication by $m_{1}^{\epsilon_{1}} \cdots m_{d}^{\epsilon_{d}}$ on each graded component (where $\epsilon_{i} \in \{0, 1\}$ for each $i$).  Tensoring with $\Q$ yields that $\alpha_{n}$ is an isomorphism as claimed.

\end{proof}

With our above work, we are now ready to prove Theorem \ref{main1}.

\begin{proof}[Proof of Theorem \ref{main1}]

Suppose that $X$ and $Y$ are two complete, simplicial toric varieties over a regular ring $R$, and suppose $\Delta_{X}$ and $\Delta_{Y}$ are isomorphic via a rational linear automorphism $F_{\R} : N_{\R}^{X} \longrightarrow N_{\R}^{Y}$.  We know from Theorem \ref{isogeny} that $F$ induces a morphism $\BOT_{X} \longrightarrow \BOT_{Y}$ which is component-wise in each degree an isogeny.  By Theorem \ref{fixedpoint4}, the induced maps $(f^{*})_{\Q}$ are component-wise in each degree isomorphisms.


By Corollary \ref{inducedmap1}, the induced morphism of cosimplicial spectra $\KH(\BOT_{Y}) \otimes \Q \longrightarrow \KH(\BOT_{X}) \otimes \Q$ is in each degree component-wise given by morphisms of the form $(f^{*})_{\Q}$ and as a consequence, the morphism $\KH(\BOT_{Y}) \otimes \Q \longrightarrow \KH(\BOT_{X}) \otimes \Q$ is a weak equivalence of cosimplicial spectra; therefore the morphism $\holim(\KH(\BOT_{Y}) \otimes \Q) \longrightarrow \holim(\KH(\BOT_{X}) \otimes \Q)$ is a weak equivalence of spectra as well.  This gives us the following diagram: $$
\xymatrix{ \holim(\KH(\BOT_{Y}) \otimes \Q) \ar[r]^-{\sim} \ar[d]_-{\sim} & \holim(\KH(\BOT_{X}) \otimes \Q) \ar[d]^-{\sim} \\
\holim(\KH(\mathcal{U}_{Y}) \otimes \Q)  &  \holim(\KH(\mathcal{U}_{X}) \otimes \Q) \\
\KH(Y) \otimes \Q  \ar[u]^-{\sim} & \KH(X) \otimes \Q \ar[u]_-{\sim} }
$$ where the morphisms $\holim(\KH(\BOT_{Y}) \otimes \Q) \longrightarrow \holim(\KH(\mathcal{U}_{Y}) \otimes \Q)$ and $\holim(\KH(\BOT_{X}) \otimes \Q) \longrightarrow \holim(\KH(\mathcal{U}_{X}) \otimes \Q)$ are weak equivalences by Corollary \ref{holim}, and the morphisms $\KH(Y) \otimes \Q \longrightarrow \holim(\KH(\mathcal{U}_{Y}) \otimes \Q)$ and $\KH(X) \otimes \Q \longrightarrow \holim(\KH(\mathcal{U}_{X}) \otimes \Q)$ are weak equivalences by the fact that $\KH(-) \otimes \Q$ satisfies Zariski descent.  So the spectra $\KH(X) \otimes \Q$ and $\KH(Y) \otimes \Q$ are weakly equivalent, establishing $\KH_{n}(X) \otimes \Q \cong \KH_{n}(Y) \otimes \Q$ for all $n$.  This completes the proof.
%

\end{proof}

\section{Applications of Theorem \ref{main1}}\label{sect:applications}

With Theorem \ref{main1} proven, we can now apply it to calculate the groups $\KH_{n}(\Prj (q_{0}, ..., q_{d})) \otimes \Q$, by comparing them to $\KH_{n}(\Prj^{d}) \otimes \Q$.  

\begin{thm}\label{ratiso}

Let $X$ and $Y$ be two complete $d$-dimensional simplicial toric varieties over a regular ring $R$, such that the number of maximal cones in both $\Delta_{X}$ and $\Delta_{Y}$ is $d+1$.  Then $\KH_{n}(X) \otimes \Q \cong \KH_{n}(Y) \otimes \Q$ for all $n$.

\end{thm}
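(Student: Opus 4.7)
The plan is to verify the hypothesis of Theorem \ref{main1}: I will show that any two complete simplicial fans in dimension $d$ with exactly $d+1$ maximal cones are rationally isomorphic via a rational linear automorphism of the ambient lattices, whereupon Theorem \ref{main1} immediately yields $\KH(X) \otimes \Q \simeq \KH(Y) \otimes \Q$. The first step is to pin down the combinatorial structure of such a fan $\Delta$. Because each codimension-one face of a complete simplicial fan is shared by exactly two maximal cones, the dual graph (with maximal cones as vertices and codimension-one adjacencies as edges) is $d$-regular on $d+1$ vertices, hence the complete graph $K_{d+1}$. Every pair of maximal cones $\sigma_i, \sigma_j$ thus shares a facet $\tau_{ij}$ consisting of $d-1$ rays, and each has exactly one additional ray not lying in $\tau_{ij}$. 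Tracing which maximal cones contain each ray (equivalently, invoking the Euler--Poincar\'e formula for the triangulated $(d-1)$-sphere underlying $\Delta$, which forces $f_0 = d+1$), one concludes that $\Delta$ has exactly $d+1$ rays $\rho_0, \ldots, \rho_d$ with $\sigma_i = \cone(\rho_j : j \neq i)$. Any $d$ of these rays form a $\Q$-basis of $N_\Q$, and since $\Delta$ is complete there is a unique (up to positive scalar) positive rational dependence $\sum_i q_i \rho_i = 0$ with each $q_i > 0$.

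Applying this to both $\Delta_X$ and $\Delta_Y$ produces rays $\rho_0^X, \ldots, \rho_d^X$ with relation $\sum_i q_i^X \rho_i^X = 0$, and similarly $\rho_0^Y, \ldots, \rho_d^Y$ with $\sum_i q_i^Y \rho_i^Y = 0$. To construct $F_\R$, set $\tilde{\rho}_i^X = \rho_i^X / q_i^X$ and $\tilde{\rho}_i^Y = \rho_i^Y / q_i^Y$, so that $\sum_i \tilde{\rho}_i^X = 0 = \sum_i \tilde{\rho}_i^Y$, and define $F_\R : N_\R^X \longrightarrow N_\R^Y$ as the unique linear map sending $\tilde{\rho}_i^X \mapsto \tilde{\rho}_i^Y$ for $i = 0, \ldots, d-1$. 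The linear relation forces $F_\R(\tilde{\rho}_d^X) = \tilde{\rho}_d^Y$ automatically, so $F_\R(\rho_i^X) = (q_i^X / q_i^Y)\rho_i^Y$ is a positive rational multiple of the corresponding ray of $\Delta_Y$. Hence $F_\R$ is a rational linear automorphism mapping rays to rays, so it induces a bijection between the cones of $\Delta_X$ and those of $\Delta_Y$, and Theorem \ref{main1} applies to give the desired conclusion $\KH_n(X) \otimes \Q \cong \KH_n(Y) \otimes \Q$ for all $n$.

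The main obstacle is the combinatorial step in the first paragraph. In dimension $d \geq 3$ one must rule out the possibility that the ``opposite'' rays $\rho_j^i$ and $\rho_i^j$ in adjacent maximal cones are identified inconsistently across different pairs, so that $\Delta$ might a priori carry more than $d+1$ distinct rays. Verifying this rigidity via the Euler--Poincar\'e relation (or by a direct inductive identification tracking which facets of each $\sigma_i$ contain a given ray) is the only nontrivial input; once it is in hand, the construction of $F_\R$ and the appeal to Theorem \ref{main1} are routine.
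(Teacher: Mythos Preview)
Your approach is exactly the paper's: show that two complete simplicial fans with $d+1$ maximal cones are rationally isomorphic and then invoke Theorem~\ref{main1}. The paper declares the rational isomorphism an ``easy exercise'' and stops; you have (correctly) supplied that exercise, including the combinatorial identification with the boundary of a $d$-simplex and the explicit construction of $F_{\R}$ via the positive dependence relation among the rays.
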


\begin{proof}

It is an easy exercise to see that if $X$ and $Y$ be two complete $d$-dimensional simplicial toric varieties such that the number of maximal cones in both $\Delta_{X}$ and $\Delta_{Y}$ is $d+1$, then $\Delta_{X}$ and $\Delta_{Y}$ are rationally isomorphic; let $F_{\R}$ be that isomorphism.  Then the result follows by Theorem \ref{main1}.

\end{proof}

\begin{rek}

One can actually prove an alternate form of Theorem \ref{ratiso} using \cite{Gub2}.  Indeed, it is an easy exercise to show that the $X$ and $Y$ given in Theorem \ref{ratiso} are projective.  Therefore, by \cite[Corollary 2.5(c)]{Gub2}, $\KH_{n}(X)\otimes \Q \cong (\K_{n}(R)\otimes \Q)^{d+1} \cong \KH_{n}(Y)\otimes \Q$.

\end{rek}

\begin{cor}\label{main2}

If $\Prj (q_{0}, ..., q_{d})$ is any $d$-dimensional weighted projective space defined over a regular ring $R$, then $\KH_{n}(\Prj (q_{0}, ..., q_{d})) \otimes \Q \cong \KH_{n}(\Prj^{d}) \otimes \Q$ for all $n$, establishing part (a) of Theorem \ref{bigmain1}.


\end{cor}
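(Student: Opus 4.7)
The plan is to verify that $\Prj(q_{0}, \ldots, q_{d})$ falls into the hypotheses of Theorem \ref{ratiso}, with $\Prj^{d}$ serving as the comparison variety. Recall that weighted projective space is constructed torically as follows: take the lattice $N = \Z^{d+1}/\Z(q_{0}, \ldots, q_{d})$, let $v_{i}$ denote the image of the standard basis vector $e_{i}$, and let $\Delta$ be the fan whose maximal cones are the $d+1$ cones $\sigma_{i} = \mathrm{cone}(v_{0}, \ldots, \widehat{v_{i}}, \ldots, v_{d})$. Ordinary projective space $\Prj^{d}$ arises from the analogous fan in $\Z^{d}$ generated by the standard basis vectors together with $-(e_{1} + \cdots + e_{d})$.

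First, I would verify that both $\Prj(q_{0}, \ldots, q_{d})$ and $\Prj^{d}$ are complete simplicial toric varieties of dimension $d$. Completeness follows since in each case the union of the maximal cones fills the ambient real vector space. Simpliciality follows because in each fan every maximal cone is generated by exactly $d$ vectors which, being the images of $d$ out of $d+1$ vectors whose sum (weighted by the $q_{i}$, respectively by $1$) is zero modulo the lattice relation, are linearly independent over $\R$. Second, I would count: each fan has exactly $d+1$ maximal cones, by construction.

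Having checked these two conditions, the hypotheses of Theorem \ref{ratiso} are met with $X = \Prj(q_{0}, \ldots, q_{d})$ and $Y = \Prj^{d}$. Applying Theorem \ref{ratiso} immediately yields $\KH_{n}(\Prj(q_{0}, \ldots, q_{d})) \otimes \Q \cong \KH_{n}(\Prj^{d}) \otimes \Q$ for all $n$, which is exactly the statement of Corollary \ref{main2}, and thereby part (a) of Theorem \ref{bigmain1}.

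There is essentially no obstacle here: the corollary is a direct specialization of Theorem \ref{ratiso}, and the only content is the standard toric description of weighted projective space and the verification that its fan is simplicial, complete, and has $d+1$ maximal cones. If one wanted to be explicit, one could write down the rational linear isomorphism $F_{\R}$ between the two fans by sending $v_{i} \mapsto e_{i}$ for $i = 1, \ldots, d$ and $v_{0} \mapsto -(e_{1} + \cdots + e_{d})$ (after tensoring with $\Q$ to kill the denominators introduced by the weights), but Theorem \ref{ratiso} already absorbs this construction into its statement.
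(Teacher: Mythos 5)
Your proposal is correct and matches the paper's intent exactly: Corollary \ref{main2} is stated as an immediate specialization of Theorem \ref{ratiso}, obtained by checking that $\Prj(q_{0},\ldots,q_{d})$ and $\Prj^{d}$ are both complete $d$-dimensional simplicial toric varieties whose fans have exactly $d+1$ maximal cones. The verification via the standard toric description of weighted projective space is precisely the content the paper leaves implicit.
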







We can use Theorem \ref{main2} and cdh-descent to calculate the $\KH$-theory (up to degree $0$) of weighted projective spaces of the form $\Prj(1,1,1,1,...,1,a)$.  We do so in the following corollary.

\begin{cor}\label{khp111111a}

Consider the $d$-dimensional weighted projective space $\Prj(1,1,1,1,...,1,a)$, with $a \geq 2$.  Then $\KH_{n}(\Prj(1,1,1,1,...,1,a)) = 0$ for $n \leq -1$ and $\KH_{0}(\Prj(1,1,1,1,...,1,a)) = \Z^{d+1}.$

\end{cor}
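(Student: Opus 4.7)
The plan is to apply cdh-descent for $\KH$-theory to an abstract blowup square that resolves the unique singular point $P = [0\!:\!\cdots\!:\!0\!:\!1]$ of $X := \Prj(1,\ldots,1,a)$. The rational statement $\KH_n(X)\otimes\Q \cong \KH_n(\Prj^d)\otimes\Q$ from Corollary~\ref{main2} already pins down the ranks, so the task for cdh-descent will be to upgrade this rational information to the claimed integral answer.

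First I would build the resolution in toric language. The fan of $X$ has a single non-smooth maximal cone (the cone corresponding to $P$), and its star subdivision by the primitive ray $-e_d$ produces a smooth toric variety $\tilde X \to X$ which is the blowup of $X$ at $P$. Projecting the subdivided fan along $e_d$ gives the fan of $\Prj^{d-1}$, with kernel of the projection spanned by the two opposite rays $\pm e_d$; this exhibits $\tilde X \to \Prj^{d-1}$ as a $\Prj^1$-bundle, and a direct check identifies it with $\Prj(\mathcal{O}_{\Prj^{d-1}} \oplus \mathcal{O}_{\Prj^{d-1}}(a))$. The exceptional divisor $E$ (the toric divisor corresponding to the new ray $-e_d$) is then a section of this bundle, so $E \cong \Prj^{d-1}$.

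Next I would feed the resulting abstract blowup square
$$
\begin{CD}
E @>i>> \tilde X \\
@VVV @VV\pi V \\
\{P\} @>>> X
\end{CD}
$$
into cdh-descent for $\KH$, obtaining the Mayer--Vietoris long exact sequence
$$
\cdots \to \KH_n(X) \to \KH_n(\tilde X) \oplus \KH_n(\{P\}) \to \KH_n(E) \to \KH_{n-1}(X) \to \cdots.
$$
Since $\tilde X$, $E$, and $\{P\}$ are smooth, $\KH$ agrees with $\K$ on them, and the projective bundle formula gives $\K_n(\tilde X) \cong \K_n(R)^{2d}$, $\K_n(E) \cong \K_n(R)^d$, and $\K_n(\{P\}) \cong \K_n(R)$, which all vanish in negative degrees. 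The decisive observation is that because $E$ is a section of $\pi$, the composition $E \hookrightarrow \tilde X \to \Prj^{d-1}$ is an isomorphism, so the restriction $i^* : \K_*(\tilde X) \to \K_*(E)$ is split surjective in every degree. For $n \leq -1$ this makes every neighboring term in the sequence vanish, forcing $\KH_n(X) = 0$; for $n = 0$ the same splitting kills the contribution of $\KH_1(E)$, and $\KH_0(X)$ is identified with the kernel of $\K_0(\tilde X) \oplus \K_0(\{P\}) \to \K_0(E)$, a free abelian group of rank $(2d+1)-d = d+1$, giving $\KH_0(X) \cong \Z^{d+1}$.

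The main obstacle will be the explicit identification of $\tilde X$ as the projective bundle $\Prj(\mathcal{O} \oplus \mathcal{O}(a))$ over $\Prj^{d-1}$, together with the verification that $E$ is a section of it; this is a toric computation that has to be carried out carefully from the subdivided fan. Once that structural picture is in place, the rest is essentially bookkeeping with the projective bundle formula and the surjectivity of restriction to a section.
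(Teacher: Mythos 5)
Your proposal is correct and follows essentially the same route as the paper: the same star subdivision by $-e_{d}$, the same abstract blowup square fed into cdh-descent, and the same key observation that the restriction $i^{*}$ to the exceptional divisor is surjective because $E\cong\Prj^{d-1}$ maps isomorphically under the composite with the projection $\widetilde{X}\to\Prj^{d-1}$. The one (minor but pleasant) divergence is at $n=0$: you pin down the rank directly from the projective bundle formula for $\widetilde{X}\cong\Prj(\mathcal{O}\oplus\mathcal{O}(a))$, computing $\KH_{0}(X)$ as the kernel of a surjection $\Z^{2d}\oplus\Z\to\Z^{d}$, whereas the paper only uses that $\K_{0}(\widetilde{X})$ is free abelian (citing Merkurjev--Panin) and then imports the rank $d+1$ from the rational computation of Corollary \ref{main2}; your version is more self-contained for this step at the cost of verifying the bundle structure, which is indeed a routine toric check.
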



\begin{proof}

We begin by giving a resolution of its singularities for $\Prj(1,1, ... ,1,a)$.  The fan for $\Prj(1,1,1,1,...,1,a)$ is generated by the $1$-dimensional cones $\{ e_{1}, e_{2}, ..., e_{d}, -e_{1}-e_{2}- \cdots -e_{d-1}-a e_{d} \}.$  The only singular cone is $\langle e_{1}, e_{2}, ..., e_{d-1}, -e_{1}-e_{2}- \cdots -e_{d-1}-a e_{d} \rangle$, and after refining our fan by adding the cone generated by $-e_{d}$, we get a smooth toric variety; call this smooth variety $\widetilde{X}$.  Now notice that the star of the cone $-e_{d}$ is just the fan for $\Prj^{d-1}$, so we get the blow-up square $$
\xymatrix{\Prj^{d-1} \ar[r]^-{i} \ar[d] & \widetilde{X} \ar[d]  \\
\{ * \} \ar[r] & \Prj(1,1,1,1,...,1,a)}
$$ and we obtain a resolution of singularities for $\Prj(1,1, ... ,1,a)$, as desired.

Let $i: \Prj^{d-1} \longrightarrow \widetilde{X}$ be as above, let $\pi: \widetilde{X} \longrightarrow \Prj^{d-1}$ denote the morphism induced by the lattice morphism $(x_{1}, ..., x_{d-1}, x_{d}) \mapsto (x_{1}, ..., x_{d-1})$, and let $f = \pi \circ i$.  To understand $f$, begin by picking an element $z \in \Z^{d-1}$.  Applying the map $i$, this corresponds to a ``line'' in $\Z^{d}$, given by $(z,t)$ for $t \in \Z$.  Then under $\widetilde{\pi}$, this line again maps to $z$. Applying the appropriate functors, we see that $f$ is an isomorphism.  Now since $\KH$ satisfies cdh descent, it gives rise to a long exact sequence $$
\xymatrixcolsep{0.6cm}\xymatrix{\cdots \ar[r] & \KH_{n}(\Prj(1,1,1,1,...,1,a)) \ar[r] & \KH_{n}(\widetilde{X}) \oplus \KH_{n}(k)  \ar[r]^-{\alpha_{n}} & \\
 \KH_{n}(\Prj^{d-1}) \ar[r] & \KH_{n-1}(\Prj(1,1,1,1,...,1,a)) \ar[r] & \cdots}
$$  Let $k$ denote the residue field associated to the point $\{ * \}$.  Our goal is to show that $\alpha_{n}$ is surjective.  Since $\alpha_{n}$ is the difference of the morphism $i_{n}^{*}: \KH_{n}(\widetilde{X}) \longrightarrow \KH_{n}(\Prj^{d-1})$ and the morphism $j_{n}^{*}: \KH_{n}(k) \longrightarrow \KH_{n}(\Prj^{d-1})$, it is enough to show that $i_{n}^{*}$ is surjective.  But $f = \pi \circ i$ was shown to be an isomorphism, so for every $n$, $i_{n}^{*}$ is indeed surjective.  By exactness, $\KH_{n}(\Prj(1,1,1,1,...,1,a))$ is a subgroup of $\KH_{n}(\widetilde{X}) \oplus \KH_{n}(k)$ for every $n$; therefore, $\KH_{n}(\Prj(1,1,1,1,...,1,a)) = 0$ for $n \leq -1$.  In the case $n=0$, since $\widetilde{X}$ is a smooth, projective toric variety, we have that $\KH_{0}(\widetilde{X})$ is a free abelian group of finite rank by \cite[Corollary $7.8$]{MP}; therefore, $\KH_{0}(\Prj(1,1,1,1,...,1,a))$ is itself free abelian.  By Theorem \ref{main2}, we have $\KH_{0}(\Prj(1,1,1,1,...,1,a)) = \Z^{d+1}$, as desired.


\end{proof}

\section{The $\mathcal{F}_{\K}$ groups for Complete Simplicial Toric Surfaces and for Weighted Projective Spaces $\Prj(a,b,c)$}\label{sect:fkpabc}

Having calculated the $\KH(-) \otimes \Q$ groups for weighted projective spaces in Theorem \ref{main2}, we are now ready to examine the $\mathcal{F}_{\K}$ groups.  Recall from \cite[Theorem $1.6$]{CHW} that $(\mathcal{F}_{K})_{n}(X) = H_{Zar}^{-n}(X, \mathcal{F}_{\HC}[1])$; see \cite[Definition $1.4$]{CHW} for the definition of $\mathcal{F}_{\HC}$.  The results of \cite{CHW} only hold if our varieties are defined over a field of characteristic $0$; given this, we will restrict ourselves to this case for the remainder of the paper.

In \cite{CHSW}, the authors prove that if $k$ is a field of characteristic $0$ and $X$ a $k$-scheme essentially of finite type and of dimension $d$, then $X$ is $\K_{-d}$-regular and for $n < -d$, we have $\K_{n}(X) = 0$ .  Our goal will be to derive stronger $\K$-regularity results for complete toric varieties.  We begin with the case of complete toric surfaces, and then extend these results to higher dimensional complete toric varieties (satisfying extra conditions) in Section \ref{sect:fkdimn}.  We begin by with the following theorem.




\begin{thm}[$\mathcal{F}_{\K}$ Decomposition Theorem]\label{fkdecomp}

Let $X$ be any complete toric surface, and let $U_{\sigma_{1}}$, $U_{\sigma_{2}}$,...,$U_{\sigma_{m}}$ be all the open sets associated to a maximal cone in the fan $\Delta_{X}$.  Then we have \bea (\mathcal{F}_{\K})_{n}(X) \cong (\mathcal{F}_{\K})_{n}(U_{\sigma_{1}}) \oplus (\mathcal{F}_{\K})_{n}(U_{\sigma_{2}}) \oplus \cdots \oplus (\mathcal{F}_{\K})_{n}(U_{\sigma_{m}}) \eea for all $n$.

\end{thm}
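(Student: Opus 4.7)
The plan is to combine two observations: (i) the hypercohomology of $\mathcal{F}_{\HC}[1]$ vanishes on every smooth $k$-scheme, and (ii) for a complete toric \emph{surface}, every non-trivial intersection of the open sets $U_{\sigma_i}$ is smooth. A \v{C}ech-descent spectral sequence for the cover $\{U_{\sigma_i}\}_{i=1}^m$ then collapses onto the column $p=0$ and produces the claimed direct-sum decomposition.

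For (i), I would use that in characteristic $0$, $\K(U) \simeq \KH(U)$ on any smooth $k$-scheme $U$ (cf.\ \cite[Proposition 5.6]{CHWW}), so $(\mathcal{F}_{\K})_n(U) = 0$ for every $n$. Via the identification $(\mathcal{F}_{\K})_n(U) = H^{-n}(U, \mathcal{F}_{\HC}[1])$ from \cite[Theorem 1.6]{CHW}, this is exactly the vanishing of all hypercohomology groups. For (ii), any common face $\tau = \sigma_{i_0} \cap \cdots \cap \sigma_{i_p}$ of $p+1 \geq 2$ distinct $2$-dimensional maximal cones has dimension at most $1$, hence is either a ray $\rho$ or the origin $\{0\}$; the corresponding $U_{\tau}$ is $\A^1 \times \G_m$ or the dense torus $\G_m^2$, both smooth.

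I would then feed this into the \v{C}ech / Bousfield--Kan descent spectral sequence for the presheaf of spectra $\mathcal{F}_{\K}$ along the cover $\{U_{\sigma_i}\}$,
$$E_1^{p,q} = \bigoplus_{i_0 < \cdots < i_p} (\mathcal{F}_{\K})_{-q}\bigl(U_{\sigma_{i_0}} \cap \cdots \cap U_{\sigma_{i_p}}\bigr) \Longrightarrow (\mathcal{F}_{\K})_{-p-q}(X),$$
which converges because $\mathcal{F}_{\K}$ inherits Zariski descent from $\K$ and $\KH$ and our cover is finite. By (i) and (ii), $E_1^{p,q} = 0$ whenever $p \geq 1$, so the spectral sequence collapses onto the column $p = 0$, giving $(\mathcal{F}_{\K})_n(X) \cong \bigoplus_{i=1}^m (\mathcal{F}_{\K})_n(U_{\sigma_i})$.

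The main obstacle I anticipate is the formal set-up of this descent spectral sequence; however, one can bypass it by applying the $\holim/\Tot$ machinery directly to the \v{C}ech nerve $\mathcal{U}_X$ (exactly as in Section \ref{sect:BOT}) and noting that $\mathcal{F}_{\K}$ of every component of $(\mathcal{U}_X)_p$ is contractible for $p \geq 1$. This also clarifies why the argument is genuinely two-dimensional: once $\dim X \geq 3$, the intersection of two maximal cones can be a singular face of positive dimension, so additional hypotheses are required (as in Section \ref{sect:fkdimn}).
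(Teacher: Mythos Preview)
Your proposal is correct and follows essentially the same approach as the paper: both hinge on the observations that $(\mathcal{F}_{\K})_n$ vanishes on smooth $k$-schemes and that in dimension $2$ every proper intersection $U_{\sigma_{i_0}} \cap \cdots \cap U_{\sigma_{i_p}}$ is smooth, together with Zariski descent for $\mathcal{F}_{\K}$. The only cosmetic difference is packaging---the paper iterates the two-set Mayer--Vietoris sequence by induction on $m$, while you run the full \v{C}ech spectral sequence at once; these are equivalent implementations of the same idea.
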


\begin{proof}

We proceed by induction on the number of open sets associated to maximal cones.  Let $X = U_{\sigma_{1}} \cup U_{\sigma_{2}}$.  Covering $X$ by $U_{\sigma_{1}}$ and $U_{\sigma_{2}}$ and using Zariski descent, we have the long exact sequence: \bea \cdots \longrightarrow & (\mathcal{F}_{\K})_{n}(X) & \longrightarrow (\mathcal{F}_{\K})_{n}(U_{\sigma_{1}}) \oplus (\mathcal{F}_{\K})_{n}(U_{\sigma_{2}})  \longrightarrow \cdots \nonumber \\
\cdots \longrightarrow & (\mathcal{F}_{\K})_{n}(U_{\sigma_{1}} \cap U_{\sigma_{2}}) & \longrightarrow (\mathcal{F}_{\K})_{n-1}(X) \longrightarrow \cdots \eea  Since $U_{\sigma_{1}} \cap U_{\sigma_{2}} = U_{\sigma_{1} \cap \sigma_{2}}$ is smooth (by normality), $(\mathcal{F}_{\K})_{n}(U_{\sigma_{1}} \cap U_{\sigma_{2}}) = 0$ for all $n$.  The result then follows by exactness.


Now suppose the result is true for all $k < m$.  Cover $X$ by $Y = \bigcup_{i=1}^{m-1} U_{\sigma_{i}}$ and $U_{\sigma_{m}}$, and let $Z = Y \cap U_{\sigma_{m}}$.  It is an easy exercise to see that $(\mathcal{F}_{\K})_{n}(Z) = 0$ for all $n$.  Using Zariski descent, we get that \bea (\mathcal{F}_{\K})_{n}(X) \cong (\mathcal{F}_{\K})_{n}(Y) \oplus (\mathcal{F}_{\K})_{n}(U_{\sigma_{m}}) \eea  for all $n$.  Now the result follows by applying our inductive hypothesis to $Y$.



\end{proof}







So our problem reduces to calculating $(\mathcal{F}_{\K})_{n}(U_{\sigma_{i}})$, for all $i$.  Doing so is in most cases extremely difficult; however, this does provide us with an approachable way to determine if $\Prj(a,b,c)$ is $\K_{0}$-regular.  To make this determination, we use the following result, due to Gubeladze.

\begin{lem}[Gubeladze]\label{Gulem}

For any regular ring $R$ and any monoid $M$, we have $\K_{n}(R) = \K_{n}(R[M]) = 0$ for $n \leq -1$.  Therefore, if $X = U_{\sigma}$ is an affine toric variety, then $\K_{0}(X) = \Z$ and $\K_{n}(X) = 0$ for $n \leq -1$.  Consequently, $(\mathcal{F}_{\K})_{n}(X) = 0$ for $n \leq 0$.

\end{lem}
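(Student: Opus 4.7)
The plan is to treat the lemma as essentially an invocation of Gubeladze's theorem on the $K$-theory of monoid rings, followed by a direct sum decomposition argument. The first claim, $\K_n(R) = 0$ for regular $R$ and $n \leq -1$, is the classical result of Bass. The second claim, $\K_n(R[M]) = 0$ for $n \leq -1$, is the substantive input: it is Gubeladze's theorem that for any regular ring $R$ and any commutative, cancellative, torsion-free monoid $M$, the inclusion $R \hookrightarrow R[M]$ induces an isomorphism on $\K_n$ for all $n \leq 0$. I would invoke this as a black box via a citation, as the lemma attributes it to Gubeladze.

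For the affine toric case, I would write $X = U_{\sigma} = \Spec(R[\sigma^{\vee} \cap M])$ and observe that $\sigma^{\vee} \cap M$ is a submonoid of the lattice $M$, hence commutative, cancellative and torsion-free, and finitely generated by Gordan's lemma. Gubeladze's theorem therefore applies and gives $\K_n(U_{\sigma}) \cong \K_n(R)$ in all degrees $n \leq 0$. Combined with $\K_0(R) = \Z$ (for $R$ connected regular, in particular for a field) and the vanishing of $\K_n(R)$ for $n \leq -1$, this yields $\K_0(X) = \Z$ and $\K_n(X) = 0$ for $n \leq -1$.

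For the final conclusion about $(\mathcal{F}_{\K})_n$, I would use the decomposition $\K_n(X) \cong \KH_n(X) \oplus (\mathcal{F}_{\K})_n(X)$ from \cite[Proposition $5.6$]{CHWW}. In degrees $n \leq -1$ the group $\K_n(U_{\sigma})$ vanishes, so each summand is zero. In degree $n = 0$, Proposition \ref{prop:aff} identifies $\KH_0(U_{\sigma})$ with $\K_0(T_{\sigma})$; since $T_{\sigma}$ is a split algebraic torus over the regular ring $R$, the Fundamental Theorem of $K$-theory applied iteratively to its Laurent polynomial coordinate ring yields $\K_0(T_{\sigma}) = \K_0(R) = \Z$. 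The direct summand $\KH_0(U_{\sigma}) = \Z$ therefore already fills out all of $\K_0(U_{\sigma}) = \Z$, forcing the complementary summand $(\mathcal{F}_{\K})_0(U_{\sigma})$ to vanish.

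The only genuine obstacle is Gubeladze's theorem on monoid rings itself, which the lemma explicitly credits and which I would invoke as a citation rather than reprove. Everything else is routine bookkeeping: checking that $\sigma^{\vee} \cap M$ meets Gubeladze's hypotheses, and verifying that the $\KH_0$-summand matches $\K_0$ in rank so that its complement in the direct sum decomposition is forced to be trivial.
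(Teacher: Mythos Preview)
Your proposal is correct and follows essentially the same approach as the paper: both treat the core input as Gubeladze's theorem, invoked by citation. The paper's own proof is in fact nothing more than two references---to \cite[Theorem~1.3]{Gub} for the monoid-ring vanishing and to \cite[Proposition~5.7]{CHWW} for the $\K_0$ statement---so your write-up is considerably more explicit, particularly in spelling out why $(\mathcal{F}_{\K})_0(U_\sigma)=0$ via the direct-sum decomposition and the identification $\KH_0(U_\sigma)\cong\K_0(T_\sigma)\cong\Z$; the paper simply outsources that step to the cited proposition.
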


\begin{proof}

See \cite[Theorem $1.3$]{Gub}.  The $\K_{0}$ part may also be found in \cite[Proposition $5.7$]{CHWW}.

\end{proof}

\begin{cor}\label{fkdim2cor1}

Any complete toric surface is $\K_{0}$-regular.

\end{cor}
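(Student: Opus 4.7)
My plan is to reduce $\K_{0}$-regularity of a complete toric surface $X$ to the vanishing $(\mathcal{F}_{\K})_{0}(X \times \A^{p}) = 0$ for every $p \geq 0$. By \cite[Proposition $5.6$]{CHWW}, for any toric variety $Y$ over our characteristic-zero field we have a natural splitting $\K_{n}(Y) \cong \KH_{n}(Y) \oplus (\mathcal{F}_{\K})_{n}(Y)$. Applied to $Y = X$ and to $Y = X \times \A^{p}$ (a toric variety, since the product of toric varieties is toric), combined with homotopy invariance $\KH_{0}(X) \cong \KH_{0}(X \times \A^{p})$, the natural map $\K_{0}(X) \longrightarrow \K_{0}(X \times \A^{p})$ is an isomorphism as soon as the $\mathcal{F}_{\K}$-summand vanishes on both sides.

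The vanishing $(\mathcal{F}_{\K})_{0}(X) = 0$ is immediate: Theorem \ref{fkdecomp} decomposes $(\mathcal{F}_{\K})_{0}(X)$ as a direct sum over the affine opens $U_{\sigma_{i}}$ associated to maximal cones, and Lemma \ref{Gulem} kills each summand.

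To handle $X \times \A^{p}$ I would rerun the proof of Theorem \ref{fkdecomp} verbatim on the induced open cover $\{ U_{\sigma_{i}} \times \A^{p} \}$ of $X \times \A^{p}$. Two ingredients are needed. First, the pairwise intersections $U_{\sigma_{i} \cap \sigma_{j}} \times \A^{p}$ must still be smooth, so that $\mathcal{F}_{\K}$ vanishes on them and the Zariski descent long exact sequence collapses in exactly the same way; this is where the surface hypothesis is used, since any face $\sigma_{i} \cap \sigma_{j}$ of a $2$-dimensional cone has dimension at most $1$ and is therefore automatically simplicial and smooth, and smoothness is preserved under $- \times \A^{p}$. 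Second, each summand $(\mathcal{F}_{\K})_{0}(U_{\sigma_{i}} \times \A^{p})$ must vanish; since $U_{\sigma_{i}} \times \A^{p} = \Spec R[(\sigma_{i}^{\vee} \cap M) \oplus \N^{p}]$ is still the spectrum of a commutative monoid algebra over the base field, Lemma \ref{Gulem} applies directly.

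Putting these together yields $(\mathcal{F}_{\K})_{0}(X \times \A^{p}) \cong \bigoplus_{i} (\mathcal{F}_{\K})_{0}(U_{\sigma_{i}} \times \A^{p}) = 0$ for every $p \geq 0$, from which $\K_{0}(X) \cong \K_{0}(X \times \A^{p})$ follows via the splitting above. The only point requiring genuine attention is smoothness of the pairwise intersections in the extended cover, and it is precisely this step that fails in higher dimensions, explaining why the corollary as stated is restricted to surfaces.
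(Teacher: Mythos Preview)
Your argument is correct and uses the same two ingredients (Theorem \ref{fkdecomp} and Lemma \ref{Gulem}) as the paper, but you unpack more than the paper does. The paper's one-line proof simply observes that $(\mathcal{F}_{\K})_{n}(X) = 0$ for $n \le 0$ and declares this to be $\K_{0}$-regularity; it is implicitly invoking the identification, made explicit only later in Section~\ref{sect:K-regfail} (and ultimately coming from \cite{CHW}), of $\K_{N}$-regularity with $N$-connectivity of $\mathcal{F}_{\K}(X)$. You instead work directly with the classical definition $\K_{0}(X) \cong \K_{0}(X \times \A^{p})$ and verify it by rerunning the decomposition argument on the cover $\{U_{\sigma_{i}} \times \A^{p}\}$, using that $U_{\sigma_{i}} \times \A^{p}$ is again an affine monoid scheme so that Lemma~\ref{Gulem} still applies.

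What your route buys is self-containment: you never need the \cite{CHW} equivalence between the two notions of $\K_{0}$-regularity, only the toric splitting $\K_{n} \cong \KH_{n} \oplus (\mathcal{F}_{\K})_{n}$ from \cite[Proposition~5.6]{CHWW} and homotopy invariance of $\KH$. What the paper's route buys is brevity, since once one accepts that $\K_{0}$-regularity is the same as vanishing of $(\mathcal{F}_{\K})_{n}$ for $n \le 0$, there is nothing further to check on $X \times \A^{p}$. Your closing remark about where the argument breaks in higher dimension is exactly right and matches the paper's discussion in Section~\ref{sect:fkdimn}.
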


\begin{proof}

This is immediate from Theorem \ref{fkdecomp} and Lemma \ref{Gulem}.

\end{proof}

\begin{rek}

Applying Corollary \ref{fkdim2cor1} to the weighted projective space $\Prj(a,b,c)$ establishes part (b) of Theorem \ref{bigmain1}.

\end{rek}



















\section{The $\mathcal{F}_{\K}$ groups for Weighted Projective Spaces of Higher Dimensions}\label{sect:fkdimn}

Unfortunately, the techniques of Section \ref{sect:fkpabc} do not extend to higher dimensions in general.  The problem that arises is that, while $U_{\sigma_{i}} \cap U_{\sigma_{j}}$ was smooth in dimension $2$ (by normality), $U_{\sigma_{i}} \cap U_{\sigma_{j}}$ need not be smooth in dimensions $d>2$.

\begin{exa}\label{singulardim3}

Consider the $3$-dimensional weighted projective space $\Prj(1,1,2,4)$.  The fan is given by all subsets of the set of $1$-dimensional cones $\{ (1,0,0), (0,1,0), (0,0,1), (-1,-2,-4) \}.$  Consider the two cones $\sigma_{1} = \langle (1,0,0), (0,1,0), (-1,-2,-4) \rangle$ and $\sigma_{2} = \langle (1,0,0), (0,0,1), (-1,-2,-4) \rangle.$  The intersection $\sigma_{1} \cap \sigma_{2} = \langle (1,0,0), (-1,-2,-4) \rangle$ is singular.  Indeed, for this cone to be smooth, we would need a vector $(a,b,c) \in \Z^{3}$ such that the matrix \be \left(\begin{array}{ccc}
1 &  -1 &  a \\
0 &  -2 &  b \\
0 &  -4 &  c  \\
\end{array}\right) \ee has determinant $\pm 1$.  But this is impossible since the determinant of this matrix is $4b-2c$.  

\end{exa}

As Example \ref{singulardim3} shows, $U_{\sigma_{i}} \cap U_{\sigma_{j}}$ need not be smooth even in dimension $3$, and so we cannot express $(\mathcal{F}_{\K})_{n}(X)$ as a direct sum of the $(\mathcal{F}_{\K})_{n}(U_{\sigma_{i}})$'s.  However, if we impose additional conditions on $X$, we can still recover an analog of Theorem \ref{fkdecomp} in dimensions $d > 2$.

\begin{thm}\label{fkdecomp2}

Let $X$ be a complete toric variety of dimension $d > 2$, and suppose that the dimension of the singular set of $X$ is $0$ (that is, $X$ is smooth in all codimensions $\leq d-1$).  Let $U_{\sigma_{1}}$, $U_{\sigma_{2}}$,...,$U_{\sigma_{m}}$ be all the open sets associated to a maximal cone in the fan $\Delta_{X}$.  Then we have \bea (\mathcal{F}_{\K})_{n}(X) \cong (\mathcal{F}_{\K})_{n}(U_{\sigma_{1}}) \oplus (\mathcal{F}_{\K})_{n}(U_{\sigma_{2}}) \oplus \cdots \oplus (\mathcal{F}_{\K})_{n}(U_{\sigma_{m}}) \eea for all n.

\end{thm}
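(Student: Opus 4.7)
The plan is to repeat, almost verbatim, the inductive proof of Theorem~\ref{fkdecomp}. The only place that argument relied on $\dim X = 2$ was in asserting that for distinct maximal cones $\sigma_i$ and $\sigma_j$ the affine open $U_{\sigma_i} \cap U_{\sigma_j} = U_{\sigma_i \cap \sigma_j}$ is smooth (whence its $\mathcal{F}_{\K}$-groups vanish). The task is therefore to recover that smoothness statement under the new hypothesis that $X$ has only isolated singularities.

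The structural observation to use is the orbit--cone correspondence: the singular locus of a toric variety is torus-invariant, hence a union of closures of torus orbits, and the orbit $O_\tau$ has dimension $d - \dim \tau$. Under the hypothesis $\dim(X^{\mathrm{sing}}) = 0$, every singular orbit must be zero-dimensional, which forces it to correspond to a $d$-dimensional (maximal) cone. Consequently, for \emph{any} non-maximal cone $\tau$ the orbit decomposition $U_\tau = \bigsqcup_{\tau' \preceq \tau} O_{\tau'}$ contains no zero-dimensional orbit, and hence no singular point of $X$; so $U_\tau$ is smooth. This is the exact higher-dimensional substitute for the codimension-one smoothness supplied by normality in the surface case.

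With this in hand, the induction of Theorem~\ref{fkdecomp} carries over mechanically. For the base case $X = U_{\sigma_1} \cup U_{\sigma_2}$, the cone $\sigma_1 \cap \sigma_2$ is a proper face of each $\sigma_i$, so $U_{\sigma_1 \cap \sigma_2}$ is smooth by the observation above, $(\mathcal{F}_{\K})_n(U_{\sigma_1 \cap \sigma_2}) = 0$, and Zariski descent produces the direct-sum splitting. For the inductive step, set $Y = \bigcup_{i=1}^{m-1} U_{\sigma_i}$, cover $X$ by $Y$ and $U_{\sigma_m}$, and consider $Z = Y \cap U_{\sigma_m} = \bigcup_{i<m} U_{\sigma_i \cap \sigma_m}$. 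Each $\sigma_i \cap \sigma_m$ is a proper face of $\sigma_m$, so each $U_{\sigma_i \cap \sigma_m}$ is smooth; since smoothness is local, $Z$ is smooth and $(\mathcal{F}_{\K})_n(Z) = 0$. Zariski descent then yields $(\mathcal{F}_{\K})_n(X) \cong (\mathcal{F}_{\K})_n(Y) \oplus (\mathcal{F}_{\K})_n(U_{\sigma_m})$, and the inductive hypothesis applied to $Y$ completes the proof.

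The only real obstacle is the identification of the correct structural replacement for the ``normality implies smoothness in codimension one'' fact used silently in the surface case; once one observes that isolated singularities on a toric variety must sit at the torus-fixed points of the maximal cones, everything else is formal. A minor bookkeeping point is that the inductive hypothesis must be phrased so as to apply to the non-complete open $Y \subseteq X$ (equivalently, as a statement about any union $\bigcup U_{\sigma_i}$ of affine charts associated to maximal cones of $\Delta_X$), exactly as is implicitly done in the proof of Theorem~\ref{fkdecomp}.
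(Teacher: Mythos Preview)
Your proposal is correct and follows the same inductive Mayer--Vietoris strategy as the paper. The one substantive difference is in how you dispatch $(\mathcal{F}_{\K})_n(Z) = 0$ for $Z = Y \cap U_{\sigma_m}$: you observe that $Z$ is a union of smooth affine opens $U_{\sigma_i \cap \sigma_m}$ and hence is itself smooth (smoothness being Zariski-local), so $\mathcal{F}_{\K}$ vanishes on $Z$ in one stroke. The paper instead proposes a secondary induction on the size of the cover $\{U_{\sigma_i \cap \sigma_m}\}_{i<m}$ of $Z$, using that the pieces and their pairwise intersections are smooth and applying Mayer--Vietoris again. Your route is slightly cleaner and avoids the inner induction entirely; the paper's route has the mild advantage that it never needs to name $Z$ as a smooth scheme, only to know that $\mathcal{F}_{\K}$ vanishes on each affine piece and each pairwise overlap. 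Either way the argument goes through, and your translation of the hypothesis ``singular set has dimension $0$'' into ``only maximal cones can be singular'' via the orbit--cone correspondence matches exactly what the paper does.
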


\begin{proof}

First notice that the dimension of a singular cone is precisely the codimension of the singularities created by that cone.  So the statement that $X$ is smooth in all codimensions $\leq d-1$ is equivalent to saying that the only possible singular cones of $\Delta_{X}$ are maximal cones.  With this observation, the proof is almost word-for-word the same as the proof for Theorem \ref{fkdecomp}.  The only difference is that, for $Z = Y \cap U_{\sigma_{m}}$, the fact that $(\mathcal{F}_{\K})_{n}(Z)=0$ for all $n$ isn't as easy to see.  However, observe that $Z$ is covered by open sets $U_{\sigma_{i} \cap \sigma_{m}}$ which are smooth by assumption, and that the intersection of any two of these is also smooth.  Now proceed by induction on the size of the cover of $Z$; we leave the details as an exercise to the interested reader.

\end{proof}

Using Theorem \ref{fkdecomp2}, we can now derive results that are analogous to those proven at the end of Section \ref{sect:fkpabc}.

\begin{cor}\label{fkdimdcor1}

Let $X$ be a complete toric variety of dimension $d > 2$, and suppose that the dimension of the singular set of $X$ is $0$.  Then $X$ is $\K_{0}$-regular.

\end{cor}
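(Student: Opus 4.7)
The proof should parallel that of Corollary \ref{fkdim2cor1} verbatim, with Theorem \ref{fkdecomp2} replacing Theorem \ref{fkdecomp}. Essentially all of the substantive work has been pushed into those two earlier results, so the plan is to invoke them in sequence and then translate the vanishing of $(\mathcal{F}_{\K})_{0}$ into $\K_{0}$-regularity.

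Concretely, I would proceed as follows. First, apply Theorem \ref{fkdecomp2}: since $X$ is complete of dimension $d > 2$ and its singular set has dimension $0$, the hypotheses hold and we obtain
\[
(\mathcal{F}_{\K})_{0}(X) \;\cong\; \bigoplus_{i=1}^{m} (\mathcal{F}_{\K})_{0}(U_{\sigma_{i}}).
\]
Next, apply Gubeladze's lemma (Lemma \ref{Gulem}): each $U_{\sigma_{i}}$ is an affine toric variety, so $(\mathcal{F}_{\K})_{0}(U_{\sigma_{i}}) = 0$. Combining, $(\mathcal{F}_{\K})_{0}(X) = 0$.

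To promote this to $\K_{0}$-regularity, I would use the splitting $\K(-) \simeq \KH(-) \oplus \mathcal{F}_{\K}(-)$ from \cite[Proposition $5.6$]{CHWW} together with homotopy invariance of $\KH$; under these, $\K_{0}$-regularity of $X$ is equivalent to the vanishing of $(\mathcal{F}_{\K})_{0}(X\times \mathbb{A}^{r})$ for all $r \geq 0$. I would then observe that $X \times \mathbb{A}^{r}$ again meets the hypotheses of the decomposition: its maximal cones are $\sigma_{i} \oplus \langle e_{1},\dots,e_{r}\rangle$, pairwise intersections are $(\sigma_{i}\cap \sigma_{j}) \oplus \langle e_{1},\dots,e_{r}\rangle$, and smoothness of these is inherited from smoothness of $\sigma_{i}\cap\sigma_{j}$ in $\Delta_{X}$. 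So exactly the same argument as above yields $(\mathcal{F}_{\K})_{0}(X \times \mathbb{A}^{r}) = 0$, and $\K_{0}$-regularity follows.

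The only point of friction I anticipate is that Theorem \ref{fkdecomp2} was stated for \emph{complete} toric varieties, while $X\times \mathbb{A}^{r}$ is not complete. However, inspecting the argument, completeness is not actually used: what is needed is only Zariski descent on the affine cover by the $U_{\sigma_{i}}$ together with smoothness of pairwise intersections and the vanishing $(\mathcal{F}_{\K})_{n}(Z) = 0$ for $Z$ a union of smooth pieces. All of these carry over to $X\times \mathbb{A}^{r}$, so this is a mild remark rather than a genuine obstacle. Once this is noted, the corollary follows immediately.
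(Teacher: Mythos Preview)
Your proof is correct, but it takes an unnecessary detour compared to the paper. The paper's argument is simply ``immediate from Theorem \ref{fkdecomp2} and Lemma \ref{Gulem}'': the decomposition of Theorem \ref{fkdecomp2} holds for \emph{all} $n$, not just $n=0$, and Lemma \ref{Gulem} gives $(\mathcal{F}_{\K})_{n}(U_{\sigma_{i}}) = 0$ for every $n \leq 0$. Combining these yields $(\mathcal{F}_{\K})_{n}(X) = 0$ for all $n \leq 0$ directly, and by the characterization from \cite{CHW} (which the paper adopts as its working definition of $\K_{N}$-regularity in Section \ref{sect:K-regfail}) this \emph{is} $\K_{0}$-regularity.

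Your route---showing only $(\mathcal{F}_{\K})_{0}(X) = 0$ and then re-running the argument on each $X \times \A^{r}$---reaches the same conclusion, but it forces you to confront the completeness hypothesis in Theorem \ref{fkdecomp2} (which you correctly note is not actually used), and it does extra work that the ``for all $n$'' version of the decomposition already absorbs. The paper's approach buys you a one-line proof with no friction; yours buys nothing extra, so you should just use the decomposition at every $n \leq 0$ simultaneously.
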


\begin{proof}

This is immediate from Theorem \ref{fkdecomp2} and Lemma \ref{Gulem}.

\end{proof}

\begin{rek}

Applying Corollary \ref{fkdimdcor1} to any weighted projective space $\Prj(q_{1},q_{2}, ... ,q_{d})$ where the singular set consists of only isolated points establishes part (c) of Theorem \ref{bigmain1}.

\end{rek}

Corollary \ref{fkdimdcor1} gives us a way to examining non-trivial classes of higher dimensional weighted projective spaces, as the following example demonstrates.

\begin{exa}\label{exap1abcd}

Consider the $d$-dimensional weighted projective space $\Prj(1,q_{1},q_{2}, ... ,q_{d})$ where $\gcd(q_{i},q_{j}) = 1$ for $i \neq j$.  The fan is generated by the $1$-dimensional cones $\{ e_{1}, e_{2}, ..., e_{d}, -q_{1}e_{1}-q_{2}e_{2}- \cdots -q_{d}e_{d} \}.$  As always, every $1$-dimensional cone is smooth, and obviously every cone involving only the $e_{i}$'s are smooth also.  So the only possibly non-smooth cones are those involving the cone $-q_{1}e_{1}-q_{2}e_{2}- \cdots -q_{d}e_{d}$.  To show the singular set consists of only isolated singular points, we need to consider non-maximal cones involving $-q_{1}e_{1}-q_{2}e_{2}- \cdots -q_{d}e_{d}$ and see that they are still smooth.  Let us consider the cone $\sigma = \langle e_{i_{1}}, e_{i_{2}}, ..., e_{i_{k}}, -q_{1}e_{1}-q_{2}e_{2}- \cdots -q_{d}e_{d} \rangle.$  Notice that $k \leq d-2$ since if $k=d-1$ then $\sigma$ would be a maximal cone.  Also notice that if $\sigma$ is shown to be smooth whenever $k=d-2$, then it is smooth for all choices of $k$.  So we can assume $k=d-2$.  Without loss of generality, suppose that $e_{i_{j}} = e_{j}$, so that we have $\sigma = \langle e_{1}, e_{2}, ..., e_{d-2}, -q_{1}e_{1}-q_{2}e_{2}- \cdots -q_{d}e_{d} \rangle.$

For $\sigma$ to be smooth, we need to be able to find a vector $(\alpha_{1},\alpha_{2}, ..., \alpha_{d}) \in \Z^{d}$ such that the matrix \be\left(\begin{array}{ccccccc}
1    &  0   &  0   &  \cdots  &  0   &  -q_{1}  &  \alpha_{1}  \\
0    &  1   &  0   &  \cdots  &  0   &  -q_{2}  &  \alpha_{2}  \\
0    &  0   &  1   &  \cdots  &  0   &  -q_{3}  &  \alpha_{3}  \\
\vdots & \vdots & \vdots &  \ddots  & \vdots & \vdots &  \vdots  \\
0    & 0    & 0    &  \cdots  & 1    & -q_{d-2}   &  \alpha_{d-2}   \\
0    & 0    & 0    &  \cdots  & 0    & -q_{d-1}   &  \alpha_{d-1}  \\
0    & 0    & 0    &  \cdots  & 0    & -q_{d}   &  \alpha_{d}
  \end{array}\right)
\ee has determinant $\pm 1$.  The determinant is $\alpha_{d-1}q_{d}-\alpha_{d}q_{d-1}$; since $\gcd(q_{d-1},q_{d}) = 1$, we may choose $\alpha_{d-1}$ and $\alpha_{d}$ such that $\alpha_{d-1}q_{d}-\alpha_{d}q_{d-1} = 1$.  Taking those choices for $\alpha_{d-1}$ and $\alpha_{d}$ and letting $\alpha_{i}=0$ for $i \leq d-2$ gives us the desired extension.  The argument is analogous for all other possible choices for $\sigma$.  Therefore, provided that $\gcd(q_{i},q_{j}) = 1$ for $i \neq j$, $\Prj(1,q_{1},q_{2}, ... ,q_{d})$ satisfies the conditions for Corollary \ref{fkdimdcor1}, and therefore is $\K_{0}$-regular.



\end{exa}


\begin{thm}\label{kp111111a}

Consider the $d$-dimensional weighted projective space $\Prj(1,1,1, ... ,1,a)$, where $a \geq 2$.  Then $\K_{n}(\Prj(1,1,1, ... ,1,a)) = 0$ for $n \leq -1$ and $\K_{0}(\Prj(1,1,1, ... ,1,a)) = \Z^{d+1}.$

\end{thm}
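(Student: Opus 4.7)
The plan is to combine the $\KH$-theory computation in Corollary \ref{khp111111a} with a vanishing result for the $\mathcal{F}_{\K}$ groups in nonpositive degrees, via the direct sum decomposition $\K_n(X) \cong \KH_n(X) \oplus (\mathcal{F}_{\K})_n(X)$ coming from \cite[Proposition 5.6]{CHWW}. Thus the entire task reduces to showing $(\mathcal{F}_{\K})_n(\Prj(1,1,\ldots,1,a)) = 0$ for every $n \leq 0$.

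First I would verify that $\Prj(1,1,\ldots,1,a)$ satisfies the hypothesis of Theorem \ref{fkdecomp2} and Corollary \ref{fkdimdcor1}, namely that its singular set is zero-dimensional. This is essentially a special case of Example \ref{exap1abcd}, since the weights $1,1,\ldots,1,a$ are pairwise coprime (any pair contains a $1$). Concretely, the only candidate singular cones involve the ray $v = -e_1 - \cdots - e_{d-1} - ae_d$; for any proper subcone $\langle e_{i_1},\ldots,e_{i_k}, v\rangle$ with $k \le d-2$, projecting to the quotient lattice modulo $\langle e_{i_1},\ldots,e_{i_k}\rangle$ shows the image of $v$ is primitive (its coefficients include at least one $\pm 1$), so the cone is smooth. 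Hence the only singular cone is the maximal cone $\langle e_1,\ldots,e_{d-1}, v\rangle$, and the associated singular locus is a single point.

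With that verified, Theorem \ref{fkdecomp2} applies and gives
\[
(\mathcal{F}_{\K})_n(\Prj(1,1,\ldots,1,a)) \;\cong\; \bigoplus_{i=1}^{d+1} (\mathcal{F}_{\K})_n(U_{\sigma_i}).
\]
Since each $U_{\sigma_i}$ is an affine toric variety, Lemma \ref{Gulem} (Gubeladze) gives $(\mathcal{F}_{\K})_n(U_{\sigma_i}) = 0$ for all $n \leq 0$, so $(\mathcal{F}_{\K})_n(\Prj(1,1,\ldots,1,a)) = 0$ for $n \leq 0$.

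Finally I assemble the pieces: the splitting $\K_n \cong \KH_n \oplus (\mathcal{F}_{\K})_n$ combined with the vanishing above gives $\K_n(\Prj(1,1,\ldots,1,a)) \cong \KH_n(\Prj(1,1,\ldots,1,a))$ for $n \leq 0$, and Corollary \ref{khp111111a} then yields $\K_n = 0$ for $n \leq -1$ and $\K_0 = \Z^{d+1}$. There is no real obstacle here since the substantive work was already done in Corollary \ref{khp111111a}, Theorem \ref{fkdecomp2}, Lemma \ref{Gulem}, and Example \ref{exap1abcd}; the only thing to check is that the hypothesis of isolated singularities is satisfied, which is a short lattice computation.
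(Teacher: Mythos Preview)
Your proposal is correct and follows essentially the same route as the paper: verify isolated singularities via Example \ref{exap1abcd}, deduce $\K_0$-regularity (i.e.\ $(\mathcal{F}_{\K})_n = 0$ for $n\le 0$), and then invoke Corollary \ref{khp111111a}. The only difference is cosmetic: the paper cites Corollary \ref{fkdimdcor1} directly, whereas you unpack its proof into Theorem \ref{fkdecomp2} plus Lemma \ref{Gulem}.
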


\begin{proof}

Corollary \ref{fkdimdcor1} and our work in Example \ref{exap1abcd} shows that $\Prj(1,1,1, ... ,1,a)$ is $\K_{0}$-regular. Applying Corollary \ref{khp111111a} then gives us the result.

\end{proof}

\begin{rek}

Theorem \ref{kp111111a} establishes part (d) of Theorem \ref{bigmain1}, and therefore completes the proof of Theorem \ref{bigmain1}.

\end{rek}

\section{Additional $\K$-regularity Results}\label{sect:K-regfail}

We conclude this paper by making two final observations regarding $\K$-regularity of weighted projective spaces.  We begin by observing that weighted projective spaces are not $\K_{1}$-regular in general, via the following theorem.

\begin{thm}\label{notk1reg}

The weighted projective space $\Prj(1,1,2)$ is not $\K_{1}$-regular.

\end{thm}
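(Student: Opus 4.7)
The plan is to combine the $\mathcal{F}_{\K}$-decomposition of Theorem \ref{fkdecomp} with a detection argument for a non-zero class in $\K_{1}(R[t])/\K_{1}(R)$ for the unique singular affine piece of $\Prj(1,1,2)$.

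First I would reduce the problem to a single affine. The fan of $\Prj(1,1,2)$ is generated by $e_{1}, e_{2}, -e_{1} - 2e_{2}$, yielding three maximal $2$-cones; two of them have determinant $\pm 1$ and are smooth, while $\sigma = \langle e_{1}, -e_{1} - 2e_{2} \rangle$ (determinant $-2$) is singular, with $U_{\sigma} \cong \Spec R$ for $R = k[x,y,z]/(xy - z^{2})$, the $A_{1}$ surface singularity. Every pairwise intersection $U_{\sigma_{i}} \cap U_{\sigma_{j}}$ is smooth (the one-cones in any toric fan are smooth, being generated by a single primitive vector) and stays smooth after crossing with $\A^{r}$. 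The Mayer--Vietoris argument used in the proof of Theorem \ref{fkdecomp}, applied both to $\Prj(1,1,2)$ and to $\Prj(1,1,2) \times \A^{r}$ via the cover $\{U_{\sigma_{i}} \times \A^{r}\}$, will then yield
\begin{equation*}
(\mathcal{F}_{\K})_{n}(\Prj(1,1,2) \times \A^{r}) \;\cong\; (\mathcal{F}_{\K})_{n}(\Spec R[t_{1}, \dots, t_{r}])
\end{equation*}
for every $n$ and $r$. Since $\KH$ is homotopy invariant, the $\K_{1}$-regularity of $\Prj(1,1,2)$ is equivalent to the $\K_{1}$-regularity of $R$, reducing the problem to showing $\K_{1}(R[t]) \neq \K_{1}(R)$.

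Second, I would produce a candidate non-zero class in $\K_{1}(R[t])/\K_{1}(R)$. Writing $R = k[u^{2}, uv, v^{2}] \subset k[u,v]$, take
\begin{equation*}
M(t) \;=\; I + t \begin{pmatrix} uv & u^{2} \\ -v^{2} & -uv \end{pmatrix}.
\end{equation*}
A direct check shows the nilpotent part $N$ satisfies $N^{2} = 0$, whence $M \in SL_{2}(R[t])$ with $M(0) = I$, producing a class $[M] \in \K_{1}(R[t])/\K_{1}(R)$. Proving $[M] \neq 0$ is the hard part: the first-order Dennis trace into $HH_{1}(R[t])$ vanishes because $\tr N = 0$ and $\tr(N\,dN) = 0$, so a subtler detection is needed. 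The plan is to pass to the Goodwillie/Chern trace $\K_{1} \to HN_{1}$ into negative cyclic homology (available in characteristic $0$), and to compute this target via the Corti\~nas--Haesemeyer--Weibel identification $(\mathcal{F}_{\K})_{*} = H^{-*}_{Zar}(\mathcal{F}_{HC}[1])$ from \cite{CHW}, combined with cdh-descent along the standard resolution of the $A_{1}$-singularity (whose exceptional divisor is a single $(-2)$-curve).

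The main obstacle is exactly this last non-vanishing step. The first reduction is essentially bookkeeping in the style already developed in the paper, and the construction of $M$ is cheap; all of the real work lies in detecting $[M]$ through a trace or through a direct cyclic-homology computation built from the resolution. If the explicit cyclic-homology calculation becomes unwieldy, the fallback is to appeal to known non-vanishing descriptions of $\K_{1}(R[t])/\K_{1}(R)$ at rational double points in the existing Weibel-era literature.
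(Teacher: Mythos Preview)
Your reduction step is correct and is exactly what the paper does: Theorem~\ref{fkdecomp} collapses $(\mathcal{F}_{\K})_{n}(\Prj(1,1,2))$ onto $(\mathcal{F}_{\K})_{n}(U_{\sigma})$ for the unique singular cone $\sigma=\langle e_{1},-e_{1}-2e_{2}\rangle$, with $U_{\sigma}\cong\Spec\bigl(k[x,y,z]/(xy-z^{2})\bigr)$. The paper then finishes in one line by invoking \cite[Theorem~4.3]{CHWW2}, which already computes $(\mathcal{F}_{\K})_{1}$ of this $A_{1}$ cone and shows it is non-zero. No explicit element, no trace computation.

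Where your proposal differs is in trying to exhibit and detect a concrete class $[M]\in \K_{1}(R[t])/\K_{1}(R)$, and this is precisely where the gap lies. You correctly observe that the Dennis trace kills your unipotent $M(t)=I+tN$, so the ``plan'' becomes a trace into $HN_{1}$ together with a cdh computation along the resolution of the $A_{1}$ point. None of that is actually carried out, and it is not a small computation: it is essentially the content of the result you would be reproving, namely the cyclic-homology calculation underlying \cite[Theorem~4.3]{CHWW2}. Your stated fallback (``appeal to known non-vanishing\ldots in the existing Weibel-era literature'') is exactly what the paper does from the start. So either cite \cite{CHWW2} after the reduction and be done, or commit to the full $HN$/cdh computation; as written, the proposal has the reduction but not the proof of non-vanishing.
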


\begin{proof}

By Theorem \ref{fkdecomp}, $(\mathcal{F}_{\K})_{n}(\Prj(1,1,2)) \cong (\mathcal{F}_{\K})_{n}(U_{\sigma})$, where $\sigma = \langle (1,0), (-1,-2) \rangle$.  But $U_{\sigma} = \Spec \left(k[u,v,w]/\langle uw-v^{2} \rangle\right)$, and by \cite[Theorem $4.3$]{CHWW2}, $(\mathcal{F}_{\K})_{1}\left(k[u,v,w]/\langle uw-v^{2} \rangle\right) \neq 0$, giving the result.

\end{proof}

However, if the dimension of our weighted projective space is bigger than $2$ and the singular set has dimension bigger than $0$, then our weighted projective space need not be $\K_{0}$-regular either.  To see this requires different techniques than what we've presented so far.  We say that $X$ is $\K_{N}$-regular if, following the notation of \cite{CHW}, $\mathcal{F}_{\K}(X)$ is $N$-connected; that is, if $(\mathcal{F}_{\K})_{n}(X) = 0$ for all $n \leq N$. $N$-connectivity for $\mathcal{F}_{\HH}$ and $\mathcal{F}_{\HC}$ is defined similarly.  Then by the definition of $\mathcal{F}_{\K}$, the SBI-sequence, and \cite[Lemma $1.5$]{CHW}, we conclude that $\mathcal{F}_{\K}(X)$ is $0$-connected if and only if $\mathcal{F}_{\HH}(X)$ is $-1$-connected.

\begin{thm}

If $X$ is the projective space $\Prj(1,1,2,4)$, then $X$ is $\K_{-1}$-regular.  Furthermore, $\mathcal{F}_{\HH}(X)$ is not $-1$-connected.  As a consequence, $X$ is not $\K_{0}$-regular, and $(\mathcal{F}_{\K})_{0}(X) \neq 0$.

\end{thm}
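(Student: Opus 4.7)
The statement has three parts, and the last is immediate once the first two are in hand: combined with $\K_{-1}$-regularity (which gives $(\mathcal{F}_\K)_n(X)=0$ for $n\leq -1$), the equivalence recalled just above the theorem, namely ``$\mathcal{F}_\K(X)$ is $0$-connected iff $\mathcal{F}_{\HH}(X)$ is $-1$-connected'', forces any failure of $0$-connectedness of $\mathcal{F}_\K$ to occur in degree $0$, yielding $(\mathcal{F}_\K)_0(X)\neq 0$. So my plan reduces to establishing the two main clauses.

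For $\K_{-1}$-regularity, I would run the Zariski Čech spectral sequence along the cover $\{U_{\sigma_i}\}_{i=1}^4$ by the four maximal affine opens of $X$. A direct determinant check shows that the only singular pairwise intersection is $\tau := \sigma_2\cap\sigma_3 = \langle e_1,\,-e_1-2e_2-4e_3\rangle$ (cf.\ Example \ref{singulardim3}), while all triple intersections (which are $1$-dimensional) and the quadruple intersection (the torus) are smooth. The spectral sequence $E_1^{p,q} = \bigoplus_{|I|=p+1}(\mathcal{F}_\K)_{-q}(U_I) \Rightarrow (\mathcal{F}_\K)_{-p-q}(X)$ then vanishes in every target degree $n\leq -1$: Gubeladze's Lemma \ref{Gulem} kills $(\mathcal{F}_\K)_m(U_I)$ with $m\leq 0$ on any affine toric $U_I$, which handles all slots with $|I|\leq 2$ in this range, and the smoothness of triple and quadruple intersections handles the remaining slots with $|I|\geq 3$.

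For the failure of $-1$-connectedness of $\mathcal{F}_{\HH}(X)$, I would first identify $U_\tau$ explicitly. Saturating the sublattice $\Z\{e_1,\,-e_1-2e_2-4e_3\}$ inside $\Z^3$ produces $N_\tau = \Z\{e_1,\,e_2+2e_3\}$ and $\widetilde{N}_\tau\cong\Z$, so that $\tau$ becomes $\langle(1,0),(-1,-2)\rangle$ in the new basis; hence Proposition \ref{toruspart1} gives $U_\tau\cong A\times\G_m$, where $A := \Spec\bigl(k[u,v,w]/(uw-v^2)\bigr)$ is the $A_1$-surface. From Theorem \ref{notk1reg} (i.e.\ \cite[Theorem 4.3]{CHWW2}) we have $(\mathcal{F}_\K)_1(A)\neq 0$; the Fundamental Theorem applied across the $\G_m$-factor yields $(\mathcal{F}_\K)_1(U_\tau)\neq 0$, and since $U_\tau$ is affine, \cite[Theorem 1.6]{CHW} gives $(\mathcal{F}_\K)_1(U_\tau) \cong (\mathcal{F}_{\HC})_0(U_\tau)\neq 0$. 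Since $U_\tau$ is affine toric, the same equivalence we will use globally forces $\mathcal{F}_{\HC}$ to be $(-1)$-connected on $U_\tau$, so the $\mathcal{F}$-SBI surjection $(\mathcal{F}_{\HH})_0(U_\tau)\twoheadrightarrow(\mathcal{F}_{\HC})_0(U_\tau)$ yields $(\mathcal{F}_{\HH})_0(U_\tau)\neq 0$. This class lives at $E_1^{1,0}$ of the Čech spectral sequence for $\mathcal{F}_{\HH}(X)$, which converges to $(\mathcal{F}_{\HH})_{-1}(X)$; I then verify survival by noting that the outgoing $d_1$ targets $\mathcal{F}_{\HH}$ of smooth triple intersections (hence is zero), higher differentials vanish for bidegree reasons, and the incoming $d_1$ from the $3$-dimensional singular opens $U_{\sigma_2},U_{\sigma_3}$ is not surjective, producing a nontrivial contribution in $(\mathcal{F}_{\HH})_{-1}(X)$.

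The main obstacle I anticipate is precisely this last surjectivity check: showing that the joint restriction $(\mathcal{F}_{\HH})_0(U_{\sigma_2})\oplus (\mathcal{F}_{\HH})_0(U_{\sigma_3})\to (\mathcal{F}_{\HH})_0(U_\tau)$ fails to be surjective requires genuinely comparing the Hochschild contributions from the ambient $3$-dimensional singular strata to those from the $2$-dimensional stratum $\tau$, and is the one step that does not reduce to a purely formal application of the machinery already in the paper. The remaining components -- Gubeladze-based bookkeeping in Step~1, the Proposition \ref{toruspart1} identification of $U_\tau$, and the closing appeal to the $\mathcal{F}_\K/\mathcal{F}_{\HH}$ equivalence -- are essentially routine.
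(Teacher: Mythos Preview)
Your treatment of $\K_{-1}$-regularity matches the paper's: both run the Zariski/\v{C}ech bookkeeping on the maximal-cone cover and invoke Gubeladze's Lemma~\ref{Gulem} together with smoothness of the higher-codimension intersections. The deduction of $(\mathcal{F}_{\K})_{0}(X)\neq 0$ from the first two clauses is also the same.

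For the failure of $(-1)$-connectedness of $\mathcal{F}_{\HH}(X)$, however, your route diverges from the paper's, and the gap you flag is genuine and not easily closed. You produce a nonzero local class in $(\mathcal{F}_{\HH})_{0}(U_{\tau})$ and then need the incoming $d_{1}$ from the singular $3$-dimensional opens not to hit it. Checking this amounts to comparing $(\mathcal{F}_{\HH})_{0}$ on the $3$-dimensional affine toric singularities with that on the $A_{1}\times\G_{m}$ stratum, and there is no general structural reason this restriction should fail to be surjective; one would essentially have to unwind the Hochschild/Andr\'{e}--Quillen pieces on both sides, which is at least as hard as a direct computation.

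The paper sidesteps this entirely by working with $\HH$ rather than $\mathcal{F}_{\HH}$. It uses the defining fiber sequence to reduce $(\mathcal{F}_{\HH})_{-1}(X)\neq 0$ to two separate computations: first, $\pi_{-1}\Hyp_{cdh}(X,\HH)=0$, obtained from the Hodge decomposition and the identification of $X$ as a finite quotient $\Prj^{3}/G$ (so the cdh Hodge pieces are $G$-invariants of those of $\Prj^{3}$, which vanish); second, $\HH_{-1}(X)\neq 0$, obtained by an explicit \v{C}ech computation of $H^{3}_{Zar}(X,\Omega^{2}_{X})$, exhibiting the class $\tfrac{1}{xyz}\,dx\wedge dy$ as a cocycle not in the image of the last \v{C}ech differential. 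This replaces your spectral-sequence survival problem with a concrete linear-algebra check on Laurent monomials, which is what makes the argument go through.
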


\begin{proof}

Begin by observing that $X$ has four affine open sets associated to maximal cones.  These sets are the prime spectra of the following rings: $k[x,y,y\inv,z,z\inv]$, $k[x,x\inv,y,z,z\inv]$, $k[x,x\inv,y,y\inv,z]$, and $k[x\inv,y\inv,z\inv,x^{-4}z,y^{-2}z]$.  The proof that $X$ is $\K_{-1}$-regular works almost the same as the proof of Theorem \ref{fkdecomp} and is left as an exercise.  To see that $\mathcal{F}_{\HH}(X)$ is not $-1$-connected, consider the long exact sequence \bea \cdots \longrightarrow (\mathcal{F}_{\HH})_{-1}(X) \longrightarrow \HH_{-1}(X) \longrightarrow \pi_{-1}\Hyp_{cdh}(X, \HH) \longrightarrow \cdots \eea Now recalling the Hodge Decomposition for Hochschild homology from \cite{Wei2}, we have that \bea \pi_{-1}\Hyp_{cdh}(X, \HH) = H_{cdh}^{1}(X, \mathcal{O}_{X}) \oplus H_{cdh}^{2}(X, \Omega_{X}^{1}) \oplus H_{cdh}^{3}(X, \Omega_{X}^{2}) \nonumber \\
\HH_{-1}(X) = H_{Zar}^{1}(X, \mathcal{O}_{X}) \oplus H_{Zar}^{2}(X, \Omega_{X}^{1}) \oplus H_{Zar}^{3}(X, \Omega_{X}^{2}) \oplus A.Q. \nonumber \eea where A.Q. denotes the Andr\'{e}-Quillen homology pieces.  $X$ is given by taking the quotient of $\Prj^{3}$ by the action of the group $G = \Z/2\Z \oplus \Z/4\Z$; therefore, $H_{cdh}^{i}(X, \Omega_{X}^{i-1}) = [H_{cdh}^{i}(\Prj^{3}, \Omega_{\Prj^{3}}^{i-1})]^{G} = 0$ for all $i$.  So we can conclude that $(\mathcal{F}_{\HH})_{-1}(X) \neq 0$ if we can show $\HH_{-1}(X) \neq 0$.  Consider the group $H_{Zar}^{3}(X, \Omega_{X}^{2})$.  Define the following modules: $M_{1} = k[x,y,y\inv,z,z\inv]\langle dx \wedge dy, dx \wedge dz, dy \wedge dz \rangle$ and similarly with $M_{2}$ and $M_{3}$ (with rings $k[x,x\inv,y,z,z\inv]$ and $k[x,x\inv,y,y\inv,z]$, respectively).  Let $M_{4}$ be the module over the ring $k[x\inv,y\inv,z\inv,x^{-4}z,y^{-2}z]$ generated by the wedge products of any two of the five differentials $\{ d(x\inv), d(y\inv), d(z\inv), d(x^{-4}z), d(y^{-2}z) \}$, and let $M$ be the module $k[x,x\inv,y,y\inv,z,z\inv]\langle dx \wedge dy, dx \wedge dz, dy \wedge dz \rangle$.  The relevant portion of the \v{C}ech complex is:
$$
\xymatrix{ \cdots \ar[r] & M_{1} \times M_{2} \times M_{3} \times M_{4} \ar[r]^-{d}  &  M \ar[r] & 0}
$$ Consider the element $\frac{1}{xyz} dx \wedge dy \in M$.  I claim this is not in the image of $d$.  Indeed, observe that $d(f_{1}, f_{2}, f_{3}, f_{4}) = f_{1}-f_{2}+f_{3}-f_{4}$.  If we let $g_{i}$ be the $dx \wedge dy$ term for $f_{i}$, where $i=1,2,3$, and we let \bea f_{4} = & g_{4} & d(x\inv) \wedge d(y\inv) + h_{4} d(x^{-4}z) \wedge d(y\inv)+ \nonumber \\
& p_{4} & d(x\inv) \wedge d(y^{-2}z) + q_{4} d(x^{-4}z) \wedge d(y^{-2}z) \nonumber \eea (any additional terms in $f_{4}$ will not map to a $dx \wedge dy$ term in $M$, so we may exclude them), then we have that the image of $d$ has $dx \wedge dy$ terms of the form \bea\label{imaged} \left( g_{1}-g_{2}+g_{3}-\frac{1}{x^{2}y^{2}}g_{4}-\frac{4z}{x^{5}y^{2}}h_{4} - \frac{2z}{x^{2}y^{3}}p_{4} - \frac{8z^{2}}{x^{5}y^{3}}q_{4} \right) dx \wedge dy. \eea  We would need Expression \ref{imaged} to be $\frac{1}{xyz}$ for some choices of the $g_{i}$'s, $h_{4}$, $p_{4}$, and $q_{4}$.  However, this is impossible; indeed, we simply observe that any terms involving $g_{4}$, $h_{4}$, $p_{4}$, and $q_{4}$ all have powers of $x$ and $y$ in the denominator that are too large, that terms involving $g_{1}$, $g_{2}$, and $g_{3}$ do not include $\frac{1}{xyz}$, and that Laurant polynomials are determined by their coefficients.  Consequently, $H_{Zar}^{3}(X, \Omega_{X}^{2}) \neq 0$, implying that $\HH_{-1}(X) \neq 0$, that $(\mathcal{F}_{\HH})_{-1}(X) \neq 0$, and that $X$ is not $\K_{0}$-regular.  Since $X$ is $\K_{-1}$-regular, this implies that $(\mathcal{F}_{\K})_{0}(X) \neq 0$ as claimed.

\end{proof}

\section*{Acknowledgements} I would like to thank Christian Haesemeyer, whose hours of dedication and infinite patience in guiding me has made this work possible, and Mark Walker, for his helpful comments on early drafts of this paper.  I would also like to thank both Justin Shih and Kenneth Maples for the many enlightening discussions and helpful comments that they both provided.  Finally, I would like to thank an anonymous referee for his suggested improvement to Theorem \ref{fixedpoint4} and for pointing out the connection between this paper and \cite{Gub2}.

\end{document}